\documentclass[11pt]{article}
\usepackage[utf8]{inputenc}
\usepackage[T1]{fontenc}    
\usepackage[round,authoryear]{natbib}
\usepackage[a4paper]{geometry}
\usepackage[english]{babel}
\usepackage{amsmath, amsthm, amssymb, mathtools, amsfonts}
\usepackage{xcolor}
\usepackage{algorithm}
\usepackage{algpseudocode}
\usepackage{booktabs} 
\usepackage{multirow}
\usepackage{graphicx} 
\graphicspath{{./}{./img/}}
\usepackage{csquotes}

\usepackage{microtype}
\usepackage{appendix}  
\usepackage{enumitem} 
\usepackage{bbm}     
\usepackage{hyperref}
\usepackage{placeins}

\newtheorem{theorem}{Theorem}[section]
\newtheorem{lemma}[theorem]{Lemma}
\newtheorem{proposition}[theorem]{Proposition}

\newtheorem{corollary}[theorem]{Corollary}

\newtheorem{remark}[theorem]{Remark}

\newcommand{\E}{\mathbb{E}} 
\newcommand{\R}{\mathbb{R}} 
 
\newcommand{\F}{\mathcal{F}} 
\newcommand{\norm}[1]{\left\lVert#1\right\rVert} 

\begin{document}

\title{
Complexity reduction in online stochastic Newton methods 
with potential $\mathcal{O}(Nd)$ total cost
}


\author{Antoine Godichon-Baggioni$^{(1)}$,   Bruno Portier$^{(2)}$ and    Guillaume Sallé$^{(2)}$ \\
$(1)$ Laboratoire de Probabilités, Statistique et Modélisation,\\
Sorbonne Université, 75005 Paris, France, antoine.godichon\_baggioni@upmc.fr \\
$(2)$ Laboratoire de mathématiques de l'INSA, \\
INSA Rouen Normandie, 76800 Saint-Etienne du Rouvray, France
}

\date{}

\maketitle

\begin{abstract}
Optimizing smooth convex functions in stochastic settings, 
where only noisy estimates of gradients and Hessians are available, 
is a fundamental problem in optimization.
While first-order methods possess a low per-iteration cost, 
their convergence is slow for ill-conditioned problems.
Stochastic Newton methods utilize second-order information to correct for local curvature,
but the $\mathcal{O}(d^3)$ per-iteration cost of computing and inverting a full Hessian,
where $d$ is the problem dimension, is prohibitive in high dimensions.
This paper introduces an online mini-batch stochastic Newton algorithm. 
The method employs a random masking strategy that selects a subset of Hessian columns at each iteration, 
substantially reducing the per-step computational cost.
This approach allows the algorithm, in the mini-batch setting,
to achieve a total computational cost for a single pass over $N$ data points of $\mathcal{O}(Nd)$,
which is comparable to first-order methods while retaining the advantages of second-order information.
We establish the almost sure convergence and asymptotic efficiency of the resulting estimator.
This property is obtained without requiring iterate averaging,
which distinguishes this work from prior analyses.
\end{abstract}

\noindent\textbf{Keywords:} 
Convex optimization; Online algorithm; Stochastic Newton method; Mini-batch;
Computational complexity; Asymptotic efficiency \\

\section{Introduction}
     
Estimating the minimizer $\theta^* \in \R^d$ of a smooth convex function $F:\R^d \rightarrow \R$ is a fundamental problem in applied mathematics and machine learning.
This paper studies this problem within a stochastic online framework, wherein the function $F$ is not directly accessible.
Instead, we assume access to stochastic oracles that provide noisy estimates of the gradient $\nabla F(\theta)$ and Hessian $\nabla^2 F(\theta)$.
This setting is common in large-scale learning, where the objective $F$ is defined as an expectation of a known function $f$,
$F(\theta)=\mathbb{E}_{\xi}[f(\xi,\theta)]$, with respect to a random variable $\xi$.
The minimizer $\theta^*$ is then estimated from sequential samples of $\xi$.

For large-scale problems, particularly when datasets exceed memory capacity or arrive sequentially, 
online algorithms are essential.
First-order methods, notably \emph{stochastic gradient descent} (SGD), 
are widely employed due to their simplicity and low per-iteration computational cost, 
typically $\mathcal{O}(d)$.
Their theoretical properties are well-understood, 
with foundational asymptotic results in \cite{pelletier_asymptotic_2000,duflo_algorithmes_1996} 
and non-asymptotic analyses in \cite{bach_non-asymptotic_2011}. 
We refer to \cite{bottou_optimization_2018} for a comprehensive survey.
However, SGD employs a single scalar step-size, which can lead to slow convergence for ill-conditioned problems.
To address this limitation, a broad class of algorithms, termed \emph{conditioned SGD},
incorporates a pre-conditioning matrix $C_{n-1}$ into the update:
\[
\theta_n = \theta_{n-1} - \alpha_n C_{n-1} g_n(\theta_{n-1}),
\]
where $\alpha_n > 0$ is the step-size and $g_n(\theta_{n-1})$ is a stochastic estimate of the gradient 
$\nabla F(\theta_{n-1})$.
The choice of $C_{n-1}$ dictates the trade-off between convergence performance and computational cost.
Well-known adaptive methods, such as AdaGrad \cite{duchi_adaptive_2011}, RMSProp \cite{hinton_neural_2012}, and Adam \cite{kingma_adam_2017},
utilize diagonal matrices for $C_{n-1}$.
This strategy adapts the step-size for each coordinate but fails to capture the off-diagonal curvature information.

The ideal conditioning, inspired by Newton's method, sets $C_{n-1}$ to approximate the inverse Hessian, 
$(\nabla^2 F(\theta_{n-1}))^{-1}$.
Methods that explicitly estimate this matrix are known as \emph{stochastic Newton methods}.
This pre-conditioning corrects for the local curvature of the objective function, 
rendering the algorithm robust to ill-conditioned problems.
Stochastic Newton methods are capable of achieving \emph{asymptotic efficiency},
a fundamental benchmark for asymptotic optimality introduced in \cite{pelletier_asymptotic_2000, duflo_algorithmes_1996}.
This property signifies that the estimator's asymptotic covariance matrix attains the Cramér-Rao lower bound
in relevant statistical settings; see, e.g., \cite{leluc_asymptotic_2023}.
Asymptotic efficiency can be achieved if the pre-conditioning matrix $C_{n}$ converges almost surely to 
the inverse Hessian at the optimum, $H^{-1} \coloneq (\nabla^2 F(\theta^*))^{-1}$, as $n \to \infty$ 
\cite{leluc_asymptotic_2023}. 
This result relaxes the stronger convergence rate conditions required by previous analyses 
\cite{boyer_asymptotic_2023}.

The principal obstacle to their practical application, however, remains the computational cost.
A straightforward implementation, such as that in \cite{leluc_asymptotic_2023}, 
requires computing and inverting an empirical Hessian at each iteration. 
This is an $\mathcal{O}(d^3)$ operation that is prohibitive in high-dimensional settings ($d \gg 1$).

\paragraph{Related works.}

Significant research has focused on reducing the $\mathcal{O}(d^3)$ computational burden of stochastic Newton methods.
Stochastic Quasi-Newton methods, such as (L)-BFGS variants for the online setting 
\cite{schraudolph_stochastic_2007,moritz_linearly-convergent_2016}, 
circumvent direct Hessian calculations by using gradient differences to build an approximation of the inverse Hessian,
 typically at a cost of $\mathcal O(md)$.
Other approaches avoid forming or inverting the full Hessian by using iterative linear solvers
that only require Hessian-vector products, such as stochastic Newton-CG methods \cite{byrd_use_2011}.

While these methods reduce the per-iteration cost, they generally lack guarantees of asymptotic efficiency, 
as they do not ensure that the pre-conditioning matrix converges to the true inverse Hessian.
 
In the particular case where the Hessian estimates are rank-one matrices, as in generalized linear models,
algorithms with a reduced $\mathcal{O}(d^2)$ cost have been developed.

For instance, \cite{bercu_efficient_2020} introduced an $\mathcal{O}(d^2)$ algorithm that leverages the Sherman-Morrison formula for efficient rank-1 updates \cite{dennis_jr_quasi-newton_1977}.
\cite{boyer_asymptotic_2023} extended this approach to a broader class of models, achieving asymptotic efficiency with $\mathcal{O}(d^2)$ complexity per iteration.

Recently, an online algorithm was introduced by \cite{godichon-baggioni_online_2025}, 
which estimates the inverse Hessian recursively with a $\mathcal{O}(d^2)$ cost per iteration, 
applies to general Hessian structures and possesses established convergence guarantees.
However, this algorithm required an additional averaging step to achieve asymptotic efficiency.

Randomized iterative algorithms for matrix inversion have been proposed to reduce computational cost. 
These methods, which include stochastic variants of quasi-Newton updates, 
compute an approximation of a fixed matrix inverse by repeatedly solving a "sketched," 
or projected, version of the linear matrix equation \cite{gower_randomized_2017}. 
Such algorithms necessarily operate in an offline setting, 
where the target matrix is known and available at each iteration.
This framework is distinct from the online estimation setting considered here, 
where the target Hessian is unknown and must be estimated sequentially from a stream of noisy data. 
Consequently, methods designed for offline iteration are not directly applicable to our problem.

An alternative path to achieving asymptotic efficiency, distinct from second-order conditioning,
is the averaging method.
Introduced independently by \cite{polyak_new_1990} and \cite{ruppert_efficient_1988}, 
and further developed by \cite{polyak_acceleration_1992}, 
this method, commonly known as Polyak-Ruppert averaging,
allows first-order SGD variants to attain the optimal asymptotic covariance.
While powerful, averaging does not inherently adapt to the local geometry 
in the same manner as methods employing Hessian information.

Mini-batching is another important consideration for large-scale applications,
as it can stabilize estimates, leverage parallel architectures, and reduce update overhead
\cite{bottou_optimization_2018}.
In the particular case where the Hessian estimates are rank-one matrices, 
\cite{godichon-baggioni_adaptive_2025} proposed an asymptotically efficient mini-batch stochastic Newton algorithm 
with reduced cost by updating the pre-conditioning matrix less frequently.
However, this approach is not directly applicable to general Hessian structures
and incorporates less information from the Hessian esimates,
as it utilizes only a single sample from the mini-batch for the pre-conditioner update rather than
aggregating information from the entire batch.
For the general case, the algorithm of \cite{godichon-baggioni_online_2025} is asymptotically efficient in the purely online setting (batch size $b=1$), 
but its extension to the mini-batch setting ($b>1$) case remains a challenge.
 
\paragraph{Contributions.}
This paper introduces an asymptotically efficient online mini-batch (or streaming) algorithm, 
the \emph{masked Stochastic Newton Algorithm} (mSNA),
which extends and improves upon the method proposed in \cite{godichon-baggioni_online_2025}.
Our work presents two principal advancements.
First, we establish the almost sure convergence of the mSNA estimator and its asymptotic efficiency 
\emph{without} requiring iterate averaging, 
theoretical guarantees not provided in the prior work.
Second, we develop a practical method with reduced computational cost 
for processing mini-batched Hessian information.
Our algorithm operates in a fully online setting (unlike offline iterative solvers)
and applies to general Hessian structures (rather than only rank-one).
Its computational gains are achieved by deriving the inverse Hessian estimator itself 
from a stochastic gradient descent (SGD) on a specific matrix functional.
We then employ a random masking strategy for this underlying SGD, 
which selects a subset of Hessian columns at each iteration; 
this technique can be seen as an adaptation of SGD with coordinate sampling \cite{leluc_sgd_2022}.
This approach allows the algorithm to process mini-batches of size $b$ with a per-iteration
computational cost of $\mathcal{O}(\ell b d + \ell d^2)$, where $\ell \ll d$ is a rank parameter.
On a dataset of size $N$, the algorithm performs $N/b$ iterations.
By setting the batch size $b=d$ and $\ell=1$, 
the total complexity for one pass over the data is of order $\mathcal{O}(Nd)$,
matching the complexity scaling of first-order methods while retaining the benefits of second-order information.
We also provide a study of an averaged version of the algorithm,
which can improve practical performance at a minimal additional computational cost.
\\
The objective of our work is therefore to propose:
(i) online stochastic Newton estimators,
(ii) developed within a setting as general as possible,
(iii) with reduced computational complexity and mini-batch processing 
(possibly $O(Nd)$ operations for one pass over the data), and
(iv) with strong theoretical guarantees, in particular asymptotic efficiency.
\\
To the best of our knowledge, no previous work has successfully combined these four aspects simultaneously.  

\paragraph{Paper Organization.}
Section \ref{sec:framework} introduces the notations, the optimization framework, and the underlying assumptions.
Section \ref{sec:inverse_estimation} presents the algorithm for recursively estimating the inverse Hessian.
This estimator is then incorporated into the proposed mini-batch stochastic Newton algorithm
in Section \ref{sec:mSNA algorithm}, where we establish its theoretical convergence analysis.
Section \ref{sec:streaming} details the implementation of this algorithm in the streaming (mini-batch) setting
and analyzes its computational complexity.
Finally, Section \ref{sec:numerical_experiments} provides numerical experiments 
to illustrate the algorithm's performance.
Proofs are deferred to the Appendix.

\section{Framework}\label{sec:framework}

\paragraph{Notations.}
The gradient and Hessian operators are denoted by $\nabla $ and $\nabla^2 $, respectively.
We denote by $\mathcal M_d (\mathbb R)$ the set of square matrices of order $d$ with real coefficients,
and by $\mathcal S_d^{++}(\mathbb R)$ the set of symmetric positive definite matrices of order $d$.
Let $I_d$ be the identity matrix of order $d$.
On $\mathcal M_d(\mathbb R)$, we denote by $\norm{ \cdot }_F$ (resp. $\norm{ \cdot }_{op}$)
the Frobenius norm (resp. the operator norm), and by $\langle \cdot , \cdot \rangle_F$ the Frobenius inner product.
For a vector $v \in \mathbb R^d$, we denote by $\norm{v}$ its Euclidean norm.
For a matrix $A \in \mathcal M_d(\mathbb R)$, $A^T$ denotes its transpose and $\mathrm{Tr}(A)$ its trace.
We denote the convergence in distribution by $\xrightarrow[]{\mathcal L}$.

Instead of studying general step-sizes $(\alpha_n)_{n \geq 1}$ satisfying the Robbins-Monro conditions
$\sum_{n} \alpha_n = + \infty$ and $\sum_{n} \alpha_n^2 < + \infty$,
we restrict ourselves for simplification to step sizes of the form
$\alpha_n \coloneq c_{\alpha} / (n^{\alpha} + c_{\alpha}')$, 
with $c_{\alpha} > 0, c_{\alpha}' \geq 0$, $1/2 < \alpha \leq 1$,
and following \cite{carpentier_stochastic_2015} we call such a sequence 
a {$\sigma(\alpha, c_{\alpha}, c_{\alpha}')$-sequence}.

\paragraph{Problem setting}
Let $F: \mathbb{R}^{d} \longrightarrow \mathbb{R}$ be a convex and twice differentiable function.
Assuming its existence and uniqueness, we consider the problem of estimating the minimizer $\theta^*$ of $F$, that is,
$$ \theta^* \coloneq \arg \min_{\theta \in \mathbb R^d} F(\theta) ,$$
with a sequence of estimators $(\theta_n)_{n \geq 0}$.
At each iteration $n \geq 1$, we assume access to stochastic oracles of the gradient and the Hessian.
More precisely, given a point $\theta \in \R^d$, the stochastic oracles return random estimates
$g_n(\theta)$ (resp. $h_n(\theta)$) of the gradient $\nabla F(\theta)$ (resp. the Hessian $\nabla^2 F(\theta)$). 
In addition, we denote by $H:= \nabla^{2}F(\theta^{*})$ the Hessian at the minimizer.
Let $(\mathcal F_n)_{n\geq 0}$ be the natural filtration generated by the iterates $(\theta_n)_{n \geq 0}$ and the oracles 
$(g_n, h_n)_{n \geq 1}$.
\begin{enumerate}[label=\textbf{(A\arabic*)}, series=assumptions]
	\item \label{Assumption oracles unbiased} \emph{(Unbiased Estimates).}
		For all $n \geq 1$ and $\theta \in \R^d$:
		\begin{enumerate}[label=\alph*)]
			\item \label{Assumption:unbiased_g}
				$\E \left[ g_{n}(\theta) \mid \mathcal F_{n-1} \right] = \nabla F(\theta)$ \quad \text{a.s.}
			\item \label{Assumption:unbiased_h}
				$\E \left[ h_n (\theta) \mid \mathcal F_{n-1} \right] = \nabla^2 F(\theta)$ \quad \text{a.s.}
		\end{enumerate}
	\item \label{Assumption expected smoothness} \emph{(Growth Condition).}
		There exists $\mathcal{L}_g, \mathcal{L}_h, \sigma^2 \geq 0$ such that for all $n \geq 1$ and $\theta \in \R^d$:
		\begin{enumerate}[label=\alph*)] 
			\item \label{Assumption:expected_smoothness_gradient} 
				$\E \left[ \norm{g_{n}(\theta)}^2 \mid \mathcal F_{n-1} \right]
				\leq  \mathcal{L}_g \left( F(\theta) - F(\theta^*) \right) + \sigma^2$ 
				\quad \text{a.s.}
			\item \label{Assumption:expected_smoothness_hessian}
				$\E \left[ \norm{h_n(\theta)}_{op}^2 \mid \mathcal{F}_{n-1} \right] \leq \mathcal{L}_h$ 
				\quad \text{a.s.}
		\end{enumerate}
	\item \label{Assumption fct: hessian positive continuous} \emph{(Hessian at Minimizer).}
		The Hessian matrix $H \coloneq \nabla^2 F(\theta^*)$ at the minimizer is positive definite,
		and the mapping $\theta	\mapsto \nabla^2 F(\theta)$ is continuous at $\theta^*$.
	\item \label{Assumption oracle lyapunov} \emph{(Lyapunov Conditions).}
		There exist $q , q' > 2$ and $M, M'>0$ such that:
		\begin{enumerate}[label=\alph*)] 
			\item \label{Assumption:Lyapunov_g} 
				$ \displaystyle \sup_{\substack{n\geq 1 \\ \norm{\theta - \theta^*} < M}}
				\E \left[ \norm{g_n(\theta) - \nabla F(\theta)}^{q}  \mid \F_{n-1} \right]
				< + \infty $
				\quad \text{a.s.}
			\item \label{Assumption:Lyapunov_H} 
				$ \displaystyle \sup_{\substack{n\geq 1 \\ \norm{\theta - \theta^*} < M'}}
				\E \left[ \norm{h_n(\theta) - \nabla^2 F(\theta)}_{F}^{q'}  \mid \F_{n-1} \right]
				< + \infty $
				\quad \text{a.s.}
		\end{enumerate}
	\item \label{Assumption oracle variance limit} \emph{(Covariance Limit).}
		If $\left( \theta_n \right)_{n \geq 0}$ converges almost surely to $\theta^*$,
		there exists $\Gamma \in \mathcal S_d^{++}$ such that 
		$\mathrm{Cov} \left( g_n(\theta_{n-1}) \mid \mathcal F_{n-1} \right) \xrightarrow[n \to \infty]{\text{a.s.}} \Gamma$.
\end{enumerate}
  
These assumptions are standard in stochastic approximation and follow those in
\cite{pelletier_weak_1998} and \cite{pelletier_almost_1998} (see also \cite{leluc_asymptotic_2023}).
Assumption \ref{Assumption oracles unbiased} could be relaxed to accommodate a vanishing bias,
but for simplicity we do not consider this case; we refer to \cite{surendran2024non} for an analysis thereof.
Assumption \ref{Assumption expected smoothness} \ref{Assumption:expected_smoothness_gradient} 
is crucial for obtaining the strong consistency of stochastic gradient estimators.
It is related to the  \emph{expected smoothness} introduced by \cite{gower_general_2019} and,
as in \cite{leluc_asymptotic_2023}, permits changes in the distribution of the gradient oracles.
Assumption \ref{Assumption expected smoothness} \ref{Assumption:expected_smoothness_hessian} is more restrictive
and implies in particular that the Hessian of $F$ is uniformly bounded.
Assumption \ref{Assumption fct: hessian positive continuous} implies that the function $F$ is 
locally strongly convex around $\theta^*$ and justifies the use of Newton methods.
This assumption is crucial for attaining asymptotic efficiency with any gradient-based method 
(see, e.g., \cite{pelletier_asymptotic_2000,godichon-baggioni_online_2019} for averaged stochastic 
gradient algorithms and \cite{leluc_asymptotic_2023} for conditioned gradient algorithms).
Assumption \ref{Assumption oracle lyapunov} is a standard known Lyapunov condition,
required to hold only locally around the minimizer $\theta^*$.
Finally, Assumption \ref{Assumption oracle variance limit} is a standard condition required to establish
the asymptotic normality of gradient-based estimators (\cite{pelletier_weak_1998}).

\paragraph{In practice}
This framework, which makes assumptions on generic oracles,
is more general than the standard setting where $F$ is defined as the expectation of a known function $f$,
i.e., $F(\theta) = \E_{\xi} \left[ f(\xi, \theta) \right]$.
This generality permits the oracle distributions to change over time 
(a setting also considered in \cite{leluc_asymptotic_2023}).
Furthermore, this oracle-based framework facilitates the study of mini-batching strategies,
including those with varying batch-sizes or non-uniform sampling.

In the standard setting where $F(\theta) = \E_{\xi} \left[ f(\xi, \theta) \right]$ 
and i.i.d. samples $(\xi_n)_{n \geq 1}$ are available, the natural oracles are 
\[ g_n(\theta) = \nabla f(\xi_n, \theta) \quad \text{and} \quad h_n(\theta) = \nabla^2 f(\xi_n, \theta) . \]
In the online mini-batch (streaming) setting, data are processed in blocks of size $b>1$.
Letting $\{\xi_{n,i}\}_{i=1}^b$ denote the samples in the block at iteration $n$,
the oracles are the empirical averages:
\[ g_n(\theta) = \frac{1}{b} \sum_{i=1}^b \nabla f(\xi_{n,i}, \theta) \quad \text{and} \quad h_n(\theta) = \frac{1}{b} \sum_{i=1}^b \nabla^2 f(\xi_{n,i}, \theta) . \]
The analysis of this mini-batch setting is detailed in Section \ref{sec:streaming}.

These oracle assumptions are satisfied if corresponding assumptions hold for the function $f$.
These assumptions are given in Appendix \ref{ass::B}. 
For classical problems such as logistic regression and p-means, a verification of the assumptions can be found 
in the appendix A of \cite{godichon-baggioni_online_2025}.

\section{ Online Estimation of the Inverse of a Positive Definite Matrix }\label{sec:inverse_estimation}

In this section, we aim to estimate recursively the inverse of the matrix $H \coloneq \nabla^2 F(\theta^*)$ 
assumed to be positive definite, 
with the help of a sequence of estimates $(H_n)_{n\geq 1}$ of $H$ adapted to the filtration $(\mathcal{F}_n)_{n \geq 0}$.
Within our framework, these estimates are obtained by evaluating the Hessian oracle $h_n$
at an estimator $\hat \theta_{n-1}$ that converges to $\theta^*$.
This sequence $(\hat \theta_n)_{n \geq 0}$ may represent the primary iterates $(\theta_n)_{n \geq 0}$
themselves, or an averaged version $(\bar \theta_n)_{n \geq 0}$ derived from them.

\subsection{Stochastic Gradient Estimation of the Inverse}

We first observe that $H^{-1}$ is the unique minimizer of the quadratic functional
$J : \mathcal{M}_{d} (\R ) \longrightarrow \R_{+}$
defined for all $A \in \mathcal{M}_{d} (\R)$ by:
    \[ J(A) = \norm{ H^{1/2} \left( A - H^{-1} \right)   }^2_F , \]
where $H^{1/2}$ denotes the unique symmetric positive definite square root of the matrix $H$.
The function $J$ is twice differentiable, 
and standard matrix calculus (see Appendix \ref{sec:gradient}) yields the gradient:
    \[ \nabla J(A) = 2 (H^{1/2})^T H^{1/2} (A - H^{-1})  = 2 \left( H A - I_d \right) . \]
Thus, the functional $J$ is $2 \lambda_{\min}(H)$-strongly convex and $2 \lambda_{\max}(H)$-smooth.

The main interest of the functional $J$ is that its gradient only depends on $H$ through the product $H A$.
One could then recursively estimate $H^{-1}$ using a SGD on the function $J$, 
by replacing $H$ by its estimates $H_n$.
The algorithm we will introduce in the next section is derived from a symmetric, 
positive definite factorization.
To aid in its analysis, we also define the related symmetrized functional:
\[ 
    J_{sym}(A) \coloneq \frac{1}{2} \left( J(A) + J(A^T) \right) , \]
    which has gradient 
\[
    \nabla J_{sym}(A) 
    = \frac{1}{2} \left( \nabla J(A) + \left( \nabla J (A^T) \right)^T \right) 
    = H A +  A H - 2 I_d ,
\]
which is symmetric when $A$ is symmetric.
A naive SGD estimator $(A_n)_{n \geq 0}$ based on either $J$ or $J_{sym}$ faces two significant limitations. 
First, a primary drawback of such an estimator is the computational cost. 
Indeed, at each step, the dense matrix multiplication requires $\mathcal{O}(d^{3})$ operations using standard algorithms.
To reduce this cost, we will consider a sketched version of the gradient.
A second limitation is that these updates do not guarantee the quadratic form associated to $A_n$ 
remains positive definite, i.e., $v^T A_n v > 0$ for all $v \in \R^d \setminus \{0\}$,
which is crucial to ensure descent directions when $A_n$ is used as a preconditioner.
We will thus propose a modification of the SGD update to ensure the positive definiteness of the estimator.

\subsection{A Reduced-Cost Positive Estimator of the Inverse}

Let $\ell \in \{1, \dots, d \}$ be an integer.
Let $(M_{\ell,n})_{n \geq 1}$ be a sequence of random diagonal projection matrices,
adapted to the filtration $(\mathcal{F}_n)_{n \geq 0}$.
At each step $n \geq 1$, $M_{\ell,n}$ is constructed by selecting a subset $I_n \subset \{1,\ldots,d\}$ of size $\ell$ uniformly at random, and setting $(M_{\ell,n})_{ii} = 1$ if $i \in I_n$ and $0$ otherwise.
We assume this random selection at step $n$ is independent of $\mathcal F_{n-1}$ and of the stochastic Hessian $H_n$.
It follows directly that $\E[M_{\ell,n} \mid \mathcal F_{n-1}] = \frac{\ell}{d} I_d$.
Let $\tilde H_n \coloneq M_{\ell,n} H_n$ be the sketched random Hessian.
Starting from a positive definite initial estimator $A_0 \coloneq I_d$,
we propose to estimate $H^{-1}$ by a sequence of estimators $(A_n)_{n \geq 0}$ defined recursively for all $n \geq 1$ by:
\begin{equation} \label{eq:def A_n}
    A_n \coloneq A_{n-1} 
    - \mathbf{1}_{\gamma_n \norm{ \tilde H_n }_{op} \leq \frac{1}{2}}
    \left(
        \gamma_{n} \left(  \tilde H_n A_{n-1} + A_{n-1} \tilde H_n^T - 2 M_{\ell,n} \right) 
        - \gamma_n^2 \tilde H_{n} A_{n-1} \tilde H_{n}^T
    \right) ,
\end{equation}
where $(\gamma_n)_{n \geq 1}$ is a $\sigma(\gamma, c_{\gamma}, c_{\gamma}')$-sequence with $\gamma \in \left(\frac{1}{2},1\right)$. 

The update \eqref{eq:def A_n} warrants several remarks.
First, the estimator $A_n$ is symmetric and positive definite by construction.
As $A_0$ is positive definite, the sequence $(A_n)_{n \geq 0}$ remains positive definite by induction, 
as shown by the following factorization:
\begin{equation} \label{eq:factorization An}
    A_{n} = 
    \left( I_d - \gamma_n \mathbf{1}_{\gamma_n \norm{ \tilde H_n }_{op} \leq \frac{1}{2}} \tilde H_n \right)
    A_{n-1} 
    \left( I_d - \gamma_n \mathbf{1}_{\gamma_n \norm{ \tilde H_n }_{op} \leq \frac{1}{2}} \tilde H_n \right)^T
    + 2 \gamma_n \mathbf{1}_{\gamma_n \norm{ \tilde H_n }_{op} \leq \frac{1}{2}} M_{\ell,n} .
\end{equation}
Indeed, $\left( I_d - \gamma_n \mathbf{1}_{\gamma_n \norm{ \tilde H_n }_{op} \leq \frac{1}{2}} \tilde H_n \right)$
is invertible, so by Sylvester's law of inertia, the first term is positive definite if $A_{n-1}$ is positive definite,
and the second term is positive semi-definite since $M_{\ell,n}$ is an orthogonal projection matrix.
Second, the $\mathcal{O}(\gamma_n)$ term in the update,
$\left(\tilde H_n A_{n-1} + A_{n-1} \tilde H_n^T - 2 M_{\ell,n} \right)$, 
is a symmetrisation of the projected stochastic gradient $2 M_{\ell,n} \left( H_n A_{n-1} - I_d \right)$
of $J$ at point $A_{n-1}$, using the fact that $A_{n-1}$ is symmetric.
Its expectation conditionally to $\mathcal{F}_{n-1}$ is $\frac{\ell}{d} \nabla J_{sym}(A_{n-1})$.
The sketching introduces an expected scaling factor of $\ell/d$, 
which slows down the convergence of the estimator to a neighborhood of $H^{-1}$.
Third, the $\mathcal{O}(\gamma_n^2)$ term in the update,
$- \gamma_n^2 \tilde H_{n} A_{n-1} \tilde H_{n}^T$,
is precisely the correction required to complete the quadratic form in the factorization above, 
which is what guarantees positive definiteness. 
Finally, the indicator $\mathbf{1}_{\gamma_n \norm{ \tilde H_n }_{op} \leq \frac{1}{2}}$ 
is a standard truncation to ensure the stability of the recursion by bounding the effect of large stochastic samples.

\subsection{Convergence Results}

We now state the convergence properties of the estimator $(A_n)_{n \geq 0}$.

\begin{proposition}\label{prop::cv rate of A_n}
    Let $(\hat \theta_n)_{n \geq 0}$ be a sequence of estimators of $\theta^*$ 
    adapted to the filtration $(\mathcal{F}_n)_{n \geq 0}$,
    and let $(A_n)_{n \geq 0}$ be the sequence defined by \eqref{eq:def A_n} 
    using $H_n \coloneq h_n(\hat \theta_{n-1})$.
    Suppose Assumptions \ref{Assumption oracles unbiased}, 
    \ref{Assumption expected smoothness}, \ref{Assumption:expected_smoothness_hessian},
    \ref{Assumption fct: hessian positive continuous},
    and \ref{Assumption oracle lyapunov}\ref{Assumption:Lyapunov_H} hold with $q' > \frac{2}{\gamma}$.

    If $\hat \theta_n \to \theta^*$ almost surely, then $A_n \to H^{-1}$ almost surely.

    In that case, let $\delta_n \coloneq \sup_{k \geq n} \norm{\hat \theta_{k-1} - \theta^*}$. 
    Let  $\mu : \mathbb{R}_+ \rightarrow \left[ 0; +\infty \right]$ be the local modulus of continuity of $\nabla^2 F$ 
    at $\theta^*$ defined by
    $
        \mu(r) \coloneq \sup_{\norm{\theta - \theta^*} \leq r}  \norm{\nabla^2 F(\theta) - \nabla^2 F(\theta^*)}_F .
    $
    By Assumption \ref{Assumption fct: hessian positive continuous}, the mapping $\mu$ is finite
    in a neighborhood of $0$ and satisfies $\lim_{r \to 0} \mu(r) = 0$.
    If $\gamma < 1$, then for any $\eta > 0$:
    \[
        \norm{A_n - H^{-1} }_F^2 
        = \mathcal{O} \left( \gamma_n (\ln{n})^{1 + \eta} 
        + \mu \left( \delta_{\lceil \frac{n}{2} \rceil} \right)^2  \right) \text{ a.s.}
    \]
\end{proposition}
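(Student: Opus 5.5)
I would treat \eqref{eq:def A_n} as a Robbins--Monro recursion on $\mathcal M_d(\R)$ for the symmetrized functional $J_{sym}$, with error $\Delta_n \coloneq A_n - H^{-1}$. Expanding the update, I would write
\[
A_n = A_{n-1} - \gamma_n \tfrac{\ell}{d}\left( H A_{n-1} + A_{n-1} H - 2I_d\right) + \gamma_n \xi_n + \gamma_n r_n ,
\]
where $\xi_n$ is a martingale increment. The remainder $r_n$ has three sources: the bias $\frac{\ell}{d}\big((\nabla^2F(\hat\theta_{n-1})-H)A_{n-1} + A_{n-1}(\cdot)^T\big)$; the $\mathcal O(\gamma_n)$ quadratic term $\gamma_n \tilde H_n A_{n-1}\tilde H_n^T$; and the truncation term, i.e.\ the indicator being $0$. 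The drift is the linear operator $\mathcal L(\Delta) = H\Delta + \Delta H$, whose spectrum is $\{\lambda_i+\lambda_j\} \subset [2\lambda_{\min}(H), 2\lambda_{\max}(H)]$. So $\|(\mathrm{Id}-\gamma_n\frac{\ell}{d}\mathcal L)\Delta\|_F \le (1-2\gamma_n\frac{\ell}{d}\lambda_{\min})\|\Delta\|_F$ for $n$ large.

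\textbf{Step 1 (a priori control and consistency).} The delicate point is that $A_{n-1}$ multiplies the noise, so I first need $\|A_n\|$ not to blow up. I would use the factorization \eqref{eq:factorization An}. On the truncation event $\|I-\gamma_n\tilde H_n\|_{op}\le 3/2$, which alone is too weak. Instead I would take $V_n=\|\Delta_n\|_F^2$ and compute $\E[V_n\mid\F_{n-1}]$. The linear drift gives $-4\gamma_n\frac{\ell}{d}\lambda_{\min} V_{n-1}$. The remaining terms are:
\begin{itemize}
\item the bias, of size $\le C\gamma_n\mu(\|\hat\theta_{n-1}-\theta^*\|)(1+V_{n-1})$;
\item the second-order terms, bounded via \ref{Assumption:expected_smoothness_hessian} by $C\gamma_n^2(1+V_{n-1})$;
\item the truncation, controlled by $\mathbb P(\gamma_n\|h_n\|_{op}>1/2\mid\F_{n-1})\le 4\gamma_n^2\mathcal L_h$.
\end{itemize}
This gives
\[
\E[V_n\mid\F_{n-1}]\le\big(1+C\gamma_n^2 + C\gamma_n \mu_{n-1}\big)V_{n-1} - c\gamma_n V_{n-1} + C\gamma_n^2 + C\gamma_n\mu_{n-1},
\]
where $\mu_{n-1}\to0$ a.s. One difficulty is that the perturbation $\gamma_n\mu_{n-1}$ is not summable. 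I would handle it by absorbing it into the $-c\gamma_n V_{n-1}$ term for $n\ge N(\omega)$, localizing on the events $\{\sup_{k\ge N}\mu_k\le \varepsilon\}$, whose union has probability one. Robbins--Siegmund then gives that $V_n$ converges a.s.\ and that $\sum\gamma_n V_{n-1}<\infty$ on each event. Hence $V_n\to0$, i.e.\ $A_n\to H^{-1}$ a.s.

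\textbf{Step 2 (rate).} Once $A_n$ is bounded a.s., I would unroll the linear recursion $\Delta_n=(\mathrm{Id}-\gamma_n\frac{\ell}{d}\mathcal L)\Delta_{n-1}+\gamma_n(\xi_n+r_n)$ as $\Delta_n=\beta_{n,0}\Delta_0+\sum_k\beta_{n,k}\gamma_k\xi_k+\sum_k\beta_{n,k}\gamma_k r_k$, with $\|\beta_{n,k}\|\le\exp(-c\sum_{j=k+1}^n\gamma_j)$. The terms behave as follows.
\begin{itemize}
\item The deterministic part decays exponentially.
\item The martingale part is $\mathcal O(\gamma_n(\ln n)^{1+\eta})$ in squared norm a.s. I would get this from the standard almost-sure rate for weighted martingale sums (as in Pelletier / Godichon-Baggioni): conditional variances are bounded via the localization of Step 1, and the Lyapunov moment \ref{Assumption:Lyapunov_H} with $q'>2/\gamma$ controls the jumps through Borel--Cantelli. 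This is exactly where $q'>2/\gamma$ enters.
\item For the remainder, I would split the sum at $\lceil n/2\rceil$. For $k<n/2$, the factor $\exp(-c\sum_{n/2}^n\gamma_j)$ kills everything, since $\gamma<1$ makes it decay faster than any power. For $k\ge n/2$, the bias is $\le C\mu(\delta_{\lceil n/2\rceil})$, and $\sum_k\beta_{n,k}\gamma_k$ is bounded, giving $\mathcal O(\mu(\delta_{\lceil n/2\rceil}))$. The $\mathcal O(\gamma_k)$ quadratic term contributes $\mathcal O(\gamma_n)$. The truncation term is handled through its expectation plus a martingale part of the same order.
\end{itemize}
Squaring and using $(a+b+c)^2\le3(a^2+b^2+c^2)$ gives the bound.

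\textbf{Main obstacle.} I expect the hardest part to be the martingale step, specifically getting the $(\ln n)^{1+\eta}$ rate when the noise $\xi_n$ is multiplied by the random, only a.s.\ bounded $A_{n-1}$. I would first try the truncation device $\xi_n\mathbf 1_{\|A_{n-1}\|\le K}$ and then let $K\to\infty$ over countably many events.
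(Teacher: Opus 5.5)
\textbf{Genuine gap in Step 1, and Step 2 depends on it.} Your own inequality for $\E[V_n\mid\F_{n-1}]$ contains, besides the multiplicative term $C\gamma_n\mu_{n-1}V_{n-1}$ (which you can absorb into $-c\gamma_nV_{n-1}$), the additive term $C\gamma_n\mu_{n-1}$. This term cannot be removed. It comes from the piece $(\nabla^2F(\hat\theta_{n-1})-H)H^{-1}$ of the bias, which does not vanish with $\Delta_{n-1}$, and even after Young's inequality an additive $C\gamma_n\mu_{n-1}^2$ remains. Localizing on $\{\sup_{k\ge N}\mu_k\le\varepsilon\}$ only bounds it by $C\varepsilon\gamma_n$, which is still not summable since $\sum\gamma_n=\infty$. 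So Robbins--Siegmund does not apply.

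In fact its conclusion $\sum_n\gamma_nV_{n-1}<\infty$ is false in general. Take $d=\ell=1$, a noiseless Hessian oracle, and $\nabla^2F(\hat\theta_{n-1})-H\asymp 1/\ln n$; this is allowed, since $\hat\theta_n\to\theta^*$ at an arbitrary rate. Then $A_n$ tracks $\nabla^2F(\hat\theta_{n-1})^{-1}$, so $V_n\asymp(\ln n)^{-2}$ and $\sum_n\gamma_n(\ln n)^{-2}=\infty$. This is precisely the $\mu(\delta_{\lceil n/2\rceil})^2$ term of the statement, which a Lyapunov argument with summable perturbations cannot capture. Consequently you obtain neither consistency nor a.s.\ boundedness of $(A_n)$. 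Step 2 relies on that boundedness both for the conditional variances of $\xi_n$ and for the size of the remainder.

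\textbf{Missing idea: polynomial growth suffices, and it is cheap.} The paper applies Robbins--Siegmund to $\|A_n\|_F^2$ (not $\|\Delta_n\|_F^2$) with the growing normalization $\zeta_n^2$, where $\zeta_n=n^{1-\gamma}(\ln n)^{1+\eta}$.
\begin{itemize}
\item The drift $-\gamma_n\langle A_{n-1},\E[\tilde H_n\mid\F_{n-1}]A_{n-1}+A_{n-1}\E[\tilde H_n\mid\F_{n-1}]\rangle_F$ is simply dropped, because it is nonpositive by convexity.
\item The term $2\gamma_n\langle A_{n-1},\E[M_{\ell,n}\mid\F_{n-1}]\rangle_F$ is split as $\frac{\gamma_n}{\zeta_n}\|A_{n-1}\|_F^2+C\gamma_n\zeta_n$.
\item After division by $\zeta_n^2$ all perturbations are summable. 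This gives $\|A_n\|_F=o(\zeta_n)$ without using $\mu$ or $\hat\theta_n\to\theta^*$.
\end{itemize}
Your Step 2 unrolling is then run under $\|A_{n-1}\|=\mathcal O(n^p)$ instead of boundedness.
\begin{itemize}
\item The martingale part is $\mathcal O(n^{2p}\gamma_n(\ln n)^{1+\eta})$, by the weighted-martingale result of C\'enac et al.\ with multiplier $(1+\|A_{n-1}\|)$. This is where $q'>2/\gamma$ enters.
\item The remainder's size involves $\|R_{n-1}\|$ itself. It is controlled by a deterministic comparison lemma with $a_n=\mu(\delta_n)+\gamma_n$ non-increasing, which is why $\delta_n$ is a supremum over $k\ge n$.
\item Together these yield $\|\Delta_n\|=\mathcal O(a_{\lceil n/2\rceil}+n^{p-\gamma/2}(\ln n)^{(1+\eta)/2})$.
\end{itemize}
This lowers the growth exponent at each pass, and after finitely many passes one reaches $p=0$. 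That gives boundedness, consistency and the rate simultaneously, and your truncation device $\xi_n\mathbf 1_{\|A_{n-1}\|\le K}$ becomes unnecessary.
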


Proposition \ref{prop::cv rate of A_n} establishes that the estimator sequence $(A_n)_{n \geq 0}$
is strongly consistent, provided that $\hat \theta_n \to \theta^*$ a.s.
Furthermore, it quantifies the asymptotic error by decomposing it into two components:
a variance term, $\mathcal{O}(\gamma_n (\ln{n})^{1 + \eta})$, which is inherent to the stochastic approximation,
and a bias term, $\mathcal{O}(\mu(\delta_{\lceil \frac{n}{2} \rceil})^2)$, 
which is controlled by the convergence rate of the estimator $\hat \theta_n$.
This sequence $(A_n)_{n \geq 0}$ will serve as the conditioning matrix 
in the stochastic Newton algorithm developed in the next section.

\subsection{Relation to Previous Works}

In \cite{godichon-baggioni_online_2025}, the authors proposed a similar online algorithm to estimate $H^{-1}$
based on a Robbins-Monro procedure to find a zero of the mapping $A \mapsto H A + A H - 2 I_d$, 
which is the gradient of the functional $J_{sym}$ defined above, and incorporating sketching.
Their algorithm ensures the positive definiteness of the estimator at each step
by introducing a more strict truncation than ours, and a projection onto a ball of matrices with 
slowly growing radius.

In the offline setting, \cite{agarwal_second-order_2017} proposed a algorithm 
to estimate the inverse of a positive definite matrix
based on a Taylor expansion of the inverse around a known matrix.
The recursion they propose is exactly a SGD on the quadratic functional $J$ defined above 
with fixed step-size $\gamma_n = 1$,
without sketching and without the positive definite correction term,
and after a few recursion they compute only the product of this inverse hessian estimate with the stochastic gradient,
resulting in a $\mathcal{O}(S d^2)$ cost per iteration, with $S$ the number of recursions.
The inverse hessian estimate starts from scratch at each iteration of the main algorithm.

\subsection{Remark on a Weighted Averaged Version}

We conclude this section with a remark on weighted averaging.
To obtain a faster convergence rate, one can also consider an averaged estimator
(see, e.g.\ \cite{polyak_acceleration_1992}, \cite{pelletier_asymptotic_2000}).
Since standard averaging can be sensitive to initialization,
we consider a weighted averaged version
(see \cite{mokkadem_generalization_2011} or \cite{boyer_asymptotic_2023}) 
recursively defined by $\bar A_0 \coloneq A_0$ and, for all $n \geq 1$:
\begin{align}\label{eq:def A_n,tau}
    \bar A_{n} = \left( 1 - \frac{\omega_n}{\sum_{k=0}^n \omega_k} \right) \bar A_{n-1}
    + \frac{\omega_n}{\sum_{k=0}^n \omega_k} A_n ,
    \quad \text{where} \quad \omega_n = ( \ln(n+1) )^{\tau} ,
\end{align}
where $\tau \geq 0$ is a weighting parameter; $\tau = 2$ is a common choice.
As $\bar A_n$ is a convex combination of the positive definite estimators $(A_k)_{0 \leq k \leq n}$,
it remains positive definite.

Despite the known benefits of averaging for optimizing asymptotic variance, the estimator $(\bar A_n)_{n \geq 0}$
will not be employed in the sequel.
The primary challenge for the estimator $(A_n)_{n \geq 0}$ is not the stochastic noise around the optimum,
but rather the slow convergence during the transient phase since the algorithm modifies only $\ell$ rows and columns at each iteration.
Averaging introduces inertia, which would further impede this initial convergence.
Furthermore, the averaging update \eqref{eq:def A_n,tau} is a dense operation,
requiring $\mathcal{O}(d^2)$ memory access to read and write all components of the estimator.
This dense access requirements contrasts with the non-averaged update \eqref{eq:def A_n};
while the calculation of that update term requires $\mathcal{O}(\ell d^2)$ operations,
the update itself only modifies the $\mathcal{O}(\ell d)$ components of $A_{n-1}$
corresponding to the selected $\ell$ rows and columns.
Given that the averaging step introduces both undesirable inertia and a dense memory access requirement,
we will retain the non-averaged estimator $(A_n)_{n \geq 0}$ for the stochastic Newton algorithm in the next section.
For a convergence analysis of a related weighted-averaging scheme 
for estimating the inverse of a positive definite matrix,
one can refer to \cite{godichon-baggioni_online_2025}.

\section{Algorithms}\label{sec:mSNA algorithm}

\subsection{A New Stochastic Newton Algorithm}
 
We propose here an algorithm named "masked Stochastic Newton algorithm" (mSNA) defined recursively for $n \geq 1$ by
\begin{align}
    \label{def::theta_n USNA}
    \theta_n & \coloneq \theta_{n-1} - \alpha_n \left( A_{n-1} + \nu_n I_d \right) g_{n}(\theta_{n-1}) \\
    \label{eq::def H_n USNA}
    \tilde H_n & \coloneq M_{\ell,n} h_n(\theta_{n-1}) \\
    \label{eq::def A_n USNA}
    A_{n}                & \coloneq A_{n-1} 
    - \mathbf{1}_{\gamma_n \norm{ \tilde H_n }_{op} \leq \frac{1}{2}}
    \left(
        \gamma_{n} \left(  \tilde H_n A_{n-1} + A_{n-1} \tilde H_n^T - 2 M_{\ell,n} \right) 
        - \gamma_n^2 \tilde H_{n} A_{n-1} \tilde H_{n}^T
    \right) ,
\end{align}
where $\theta_0 \in \mathbb{R}^d$ and $A_0 \in \mathcal{M}_d(\mathbb{R})$ are chosen arbitrarily,
$(M_{\ell,n})_{n \geq 1}$ is the sequence of random projection matrices defined in Section \ref{sec:inverse_estimation},
$(\alpha_n)_{n \geq 1}$ is a $\sigma(\alpha, c_{\alpha}, c_{\alpha}')$-sequence, 
$(\gamma_n)_{n \geq 1}$ is a $\sigma(\gamma, c_{\gamma}, c_{\gamma}')$-sequence
with $\gamma \in \left(\frac{1}{2},1\right)$,
and $(\nu_n)_{n \geq 1}$ decreases to $0$ and satisfies $\sum_{n\geq 1} \alpha_n \nu_n = +\infty$.
For that purpose, with any $\nu>0$, we take
 $\nu_{n}= \frac{\nu}{n^{1-\alpha}}$ if $\alpha < 1 $ and       $ \nu_{n} =  \frac{\nu}{\ln n}$ if $ \alpha = 1$.
The term $\nu_n$ is crucial theoretically to lower bound the smallest eigenvalue of the conditioning matrix, 
but in practice $\nu$ can be chosen arbitrarily small.
The following theorem gives the asymptotic properties of this stochastic Newton algorithm.

\begin{theorem}
\label{thm::USNA}
    Under Assumptions \ref{Assumption oracles unbiased} and \ref{Assumption expected smoothness}, 
    if $\alpha + \gamma > \frac{3}{2}$, 
    the estimator $(\theta_n)_{n \geq 0}$ defined by \eqref{def::theta_n USNA}  converge almost surely to $\theta^{*}$.     Assume also \ref{Assumption fct: hessian positive continuous}, \ref{Assumption oracle lyapunov} with  $q' > \frac{2}{\gamma}$,
    and \ref{Assumption oracle variance limit}.
    Then, denoting $\zeta \coloneq \mathbf 1_{\alpha=1} / 2 c_{\alpha}$,
    if $1 - \zeta > 0$ it holds that
    \[
        A_n \xrightarrow[n\to + \infty]{a.s.} H^{-1} \quad \text{and} \quad
        \frac{1}{\sqrt{\alpha_n}} \left( \theta_{n} - \theta^* \right)
        \xrightarrow[n \to +\infty]{\mathcal L} \mathcal N \left( 0, \frac{1}{2(1- \zeta)} H^{-1} \Gamma H^{-1} \right).
    \]
\end{theorem}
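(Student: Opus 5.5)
The proof has three stages: almost sure convergence of $\theta_n$, consistency of $A_n$, and the central limit theorem.

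\textbf{Step 1: almost sure convergence of $\theta_n$.} I will run a Robbins--Siegmund argument on $V_n = F(\theta_n) - F(\theta^*)$. Assumption \ref{Assumption expected smoothness}\ref{Assumption:expected_smoothness_hessian} gives $\norm{\nabla^2 F}_{op} \le \sqrt{\mathcal L_h}$, so $F$ is $L$-smooth. A Taylor expansion together with \ref{Assumption oracles unbiased} then yields
\[
\E[V_n \mid \F_{n-1}] \le V_{n-1} - \alpha_n \nabla F(\theta_{n-1})^T (A_{n-1}+\nu_n I_d) \nabla F(\theta_{n-1}) + \tfrac{L}{2}\alpha_n^2 \norm{A_{n-1}+\nu_n I_d}_{op}^2 \left(\mathcal L_g V_{n-1} + \sigma^2\right).
\]
Since $A_{n-1}$ is positive semi-definite (by factorization \eqref{eq:factorization An}), the drift term is at most $-\alpha_n \nu_n \norm{\nabla F(\theta_{n-1})}^2$.

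The main difficulty is controlling $\norm{A_{n}}_{op}$, which is not bounded a priori. From \eqref{eq:def A_n} and the truncation $\gamma_n\norm{\tilde H_n}_{op}\le 1/2$, I get $\norm{I_d-\gamma_n\tilde H_n}_{op} \le 1+\gamma_n\norm{\tilde H_n}_{op}$. Taking traces in \eqref{eq:factorization An} and conditional expectations then gives
\[
\E[\mathrm{Tr} A_n \mid \F_{n-1}] \le (1 + c\gamma_n)\,\mathrm{Tr} A_{n-1} + c\gamma_n.
\]
This alone only gives exponential growth. Instead I will use the negative drift $-\gamma_n \tfrac{\ell}{d}\,\mathrm{Tr}(\nabla^2F(\theta_{n-1})A_{n-1}+A_{n-1}\nabla^2F(\theta_{n-1}))$ plus the $\gamma_n^2$ terms, and obtain a polynomial a.s.\ bound $\norm{A_n}_{op} = O(n^{\beta})$ for every $\beta > 1-\gamma$, or a similar crude bound via moments. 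This is where $\alpha+\gamma>3/2$ enters: with this bound, $\sum_n \alpha_n^2 \norm{A_{n-1}}^2 < \infty$ a.s., since $2\alpha - 2(1-\gamma) > 1$.

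Robbins--Siegmund then gives $V_n \to V_\infty$ and $\sum_n \alpha_n\nu_n\norm{\nabla F(\theta_{n-1})}^2<\infty$. Combined with $\sum_n \alpha_n\nu_n=\infty$, convexity and uniqueness of the minimizer, this gives $\theta_n\to\theta^*$ a.s.

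\textbf{Step 2: consistency of $A_n$.} With $\hat\theta_{n}=\theta_n$, Proposition \ref{prop::cv rate of A_n} applies directly and yields $A_n\to H^{-1}$ a.s. In particular $A_{n-1}+\nu_n I_d \to H^{-1}$ a.s., and $\norm{A_n}_{op}$ is a.s.\ bounded.

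\textbf{Step 3: the central limit theorem.} I rewrite the recursion as
\[
\theta_n-\theta^* = \theta_{n-1}-\theta^* - \alpha_n C_{n-1}\nabla F(\theta_{n-1}) - \alpha_n C_{n-1}\epsilon_n,
\]
where $C_{n-1}=A_{n-1}+\nu_n I_d \to H^{-1}$ and $\epsilon_n=g_n(\theta_{n-1})-\nabla F(\theta_{n-1})$ is a martingale increment. Its conditional covariance satisfies $C_{n-1}\mathrm{Cov}(\epsilon_n\mid\F_{n-1})C_{n-1}^T \to H^{-1}\Gamma H^{-1}$ by \ref{Assumption oracle variance limit}. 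The Lyapunov condition \ref{Assumption oracle lyapunov}\ref{Assumption:Lyapunov_g} is needed only locally, and that suffices because $\theta_n\to\theta^*$. Near $\theta^*$, \ref{Assumption fct: hessian positive continuous} gives $C_{n-1}\nabla F(\theta_{n-1}) = C_{n-1}H(\theta_{n-1}-\theta^*) + o(\norm{\theta_{n-1}-\theta^*})$, and $C_{n-1}H \to I_d$.

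This is exactly the setting of the conditioned-SGD CLT of \cite{leluc_asymptotic_2023}, which only requires a.s.\ convergence of the conditioning matrix, or equivalently of \cite{pelletier_weak_1998}. Its Jacobian is $-I_d$, so the stability condition becomes $1-\zeta>0$. The resulting limit covariance $\Sigma$ solves
\[
(I_d-\zeta I_d)\Sigma+\Sigma(I_d-\zeta I_d)=H^{-1}\Gamma H^{-1},
\]
which gives $\Sigma = \frac{1}{2(1-\zeta)}H^{-1}\Gamma H^{-1}$.

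The main obstacle is the a priori growth control of $A_n$ in Step 1, which must be achieved before convergence of $\theta_n$ is known. A secondary technical point is checking the Lyapunov-type localization required to apply the CLT of \cite{pelletier_weak_1998}.
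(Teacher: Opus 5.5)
Your proposal is correct and follows the same route as the paper. The steps match: an a priori polynomial bound on $\lambda_{\max}(A_n)$ obtained without any information on $\theta_n$; almost sure convergence of $\theta_n$ using $\sum_n \alpha_n\nu_n=+\infty$ and $\sum_n\alpha_n^2\norm{A_{n-1}}_{op}^2<+\infty$ (which is where $\alpha+\gamma>3/2$ enters); Proposition~\ref{prop::cv rate of A_n} with $\hat\theta_n=\theta_n$; and Theorem~2 of \cite{leluc_asymptotic_2023}. The differences are minor: you control $\mathrm{Tr}(A_n)$, where convexity makes the drift nonpositive and gives $\E[\mathrm{Tr}(A_n)\mid\F_{n-1}]\le(1+c\gamma_n^2)\mathrm{Tr}(A_{n-1})+2\ell\gamma_n$, instead of running Robbins--Siegmund on $\norm{A_n}_F^2$ as in Lemma~\ref{lemma::biggest eigenvalue A_n}, and you carry out the Robbins--Siegmund argument on $F(\theta_n)-F(\theta^*)$ yourself where the paper invokes Theorem~1 of \cite{godichon-baggioni_adaptive_2025}.
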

The proof is given in Appendix, and relies on a recent result of \cite{leluc_asymptotic_2023}
that ensures the asymptotic normality of conditioned SGD as soon as the conditioning matrix is strongly consistent.
The results of Theorem \ref{thm::USNA} present a main difference with \cite{godichon-baggioni_online_2025}
since we are able to obtain the convergence and asymptotic efficiency of the algorithm without an averaging step on $\theta_{n}$. 
This is given by the following corollary: 
\begin{corollary}\label{cor::efficiency USNA}
    Under the assumptions of Theorem \ref{thm::USNA}, if $\alpha_n = 1 / (n + c_{\alpha}')$, then
    \[
        \sqrt{n} \left( \theta_{n} - \theta^* \right)
        \xrightarrow[n \to +\infty]{\mathcal L} \mathcal N \left( 0, H^{-1} \Gamma H^{-1} \right).
    \]
\end{corollary}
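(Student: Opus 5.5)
The corollary follows by specializing Theorem \ref{thm::USNA} to the step-size $\alpha_n = 1/(n + c_{\alpha}')$, which is a $\sigma(1, 1, c_{\alpha}')$-sequence, i.e.\ $\alpha = 1$ and $c_{\alpha} = 1$. First we check that the hypotheses of Theorem \ref{thm::USNA} hold for this choice. Since $\gamma \in (1/2, 1)$ is assumed throughout, we get $\alpha + \gamma = 1 + \gamma > 3/2$, so almost sure convergence of $(\theta_n)$ holds. With $\alpha = 1$ the prescribed regularization is $\nu_n = \nu/\ln n$, and then $\sum_n \alpha_n \nu_n = \sum_n \nu/((n+c_{\alpha}')\ln n) = +\infty$, as required. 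Next, $\zeta = \mathbf 1_{\alpha = 1}/(2c_{\alpha}) = 1/2$, so $1 - \zeta = 1/2 > 0$ and the condition of the theorem is met.

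Applying Theorem \ref{thm::USNA}, we obtain
\[
\frac{1}{\sqrt{\alpha_n}}(\theta_n - \theta^*) \xrightarrow[n\to+\infty]{\mathcal L} \mathcal N\left(0, \frac{1}{2(1-\zeta)} H^{-1}\Gamma H^{-1}\right),
\]
and the variance factor is $\frac{1}{2(1 - 1/2)} = 1$. It remains to pass from the normalization $1/\sqrt{\alpha_n} = \sqrt{n + c_{\alpha}'}$ to $\sqrt{n}$. We write
\[
\sqrt{n}(\theta_n - \theta^*) = \sqrt{\frac{n}{n + c_{\alpha}'}} \cdot \sqrt{n + c_{\alpha}'}\,(\theta_n - \theta^*).
\]
The deterministic factor $\sqrt{n/(n+c_{\alpha}')}$ tends to $1$, so Slutsky's lemma gives the claimed limit $\mathcal N(0, H^{-1}\Gamma H^{-1})$.

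There is no genuine obstacle here. The only points needing care are bookkeeping: identifying $c_{\alpha} = 1$ in the $\sigma$-sequence notation, so that $\zeta = 1/2$ exactly and the covariance reduces to the Cramér–Rao-type bound $H^{-1}\Gamma H^{-1}$; and confirming that the choice $\nu_n = \nu/\ln n$ for $\alpha = 1$ satisfies the divergence condition on $\sum_n \alpha_n \nu_n$.
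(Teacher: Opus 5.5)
Your proposal is correct and is the same argument the paper intends: the corollary has no separate proof in the paper and is simply Theorem~\ref{thm::USNA} specialized to $\alpha = 1$, $c_\alpha = 1$. This gives $\zeta = 1/2$ and a variance factor of $1$, and you correctly pass from the normalization $\sqrt{n + c_\alpha'}$ to $\sqrt{n}$ with Slutsky's lemma.
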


In order to establish almost sure asymptotic rates of convergence using the results of 
\cite{boyer_asymptotic_2023} and \cite{godichon-baggioni_adaptive_2025}, we have to make a stronger assumption than \ref{Assumption fct: hessian positive continuous}.

\begin{enumerate}[label=\textbf{(A\arabic*)}, resume=assumptions]
	\item \label{Assumption fct: hessian lipschitz}  {\emph{(Lipschitz at $\theta^*$).} }
		The mapping $\theta \mapsto \nabla^2 F(\theta)$ is Lipschitz continuous at $\theta^*$,
        i.e. there exists $r>0$ and $L$ such that for all $\theta \in \mathcal{B}(\theta^*,r)$, 
        $ \norm{\nabla^2 F(\theta) - H}_F \leq L \norm{\theta - \theta^*}$.
\end{enumerate}
Observe that, as far as we know, averaged conditioned algorithms have only been studied in the literature 
with the additional Assumption \ref{Assumption fct: hessian lipschitz} that the Hessian is 
 {Lipschitz continuous at the minimizer,}
contrary to the non-averaged mSNA with step size $\alpha_n = 1/(n + c_{\alpha}')$.

\begin{theorem}\label{thm::USNA convergence rates}
    Under the assumptions of Theorem \ref{thm::USNA} and Assumption \ref{Assumption fct: hessian lipschitz},
    if the exponent $q$ in Assumption \ref{Assumption oracle lyapunov} satisfies $q > \frac{2}{\alpha}$,
    then for any $\eta > 0$:
    \[
        \norm{\theta_n - \theta^* }^2 = \mathcal{O} \left( \frac{\ln{n}}{n^\alpha} \right) \text{ a.s.}
        \quad \text{and} \quad
        \norm{A_n - H^{-1}}_F^2 = \mathcal{O} \left( \frac{(\ln{n})^{1+\eta}}{n^{\min \{\alpha, \gamma \}}} \right) \text{ a.s.}
    \]
\end{theorem}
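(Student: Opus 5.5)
The argument runs in two stages. First I get the rate for $\theta_n$, using only the almost sure convergence $A_n \to H^{-1}$ from Theorem \ref{thm::USNA}. Then I feed that rate back into Proposition \ref{prop::cv rate of A_n} to get the rate for $A_n$.

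\textbf{Rate for $\theta_n$.} Since $A_{n-1}+\nu_n I_d \to H^{-1}$ a.s. and $\theta_n\to\theta^*$ a.s., I rewrite the recursion as a conditioned stochastic gradient scheme with a martingale increment:
\[
\theta_n-\theta^* = \theta_{n-1}-\theta^* - \alpha_n C_{n-1}\nabla F(\theta_{n-1}) + \alpha_n C_{n-1}\xi_n,
\]
where $C_{n-1}=A_{n-1}+\nu_nI_d$ and $\xi_n = \nabla F(\theta_{n-1})-g_n(\theta_{n-1})$. Taylor's theorem with \ref{Assumption fct: hessian lipschitz} gives
\[
\nabla F(\theta_{n-1}) = H(\theta_{n-1}-\theta^*) + \mathcal O\bigl(\norm{\theta_{n-1}-\theta^*}^2\bigr)
\]
near $\theta^*$. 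This puts the recursion in the form
\[
\Delta_n = (I_d-\alpha_n C_{n-1}H)\Delta_{n-1} + \alpha_n C_{n-1}\xi_n + \alpha_n r_n,
\qquad r_n = \mathcal O(\norm{\Delta_{n-1}}^2),
\]
with $C_{n-1}H\to I_d$ a.s. This is exactly the setting of the almost sure rate results of \cite{boyer_asymptotic_2023} and \cite{godichon-baggioni_adaptive_2025}: a conditioned SGD whose conditioner converges a.s. to a matrix $C$ with $CH$ having positive spectrum (here the identity).

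Their proofs unroll the linear recursion and treat three pieces separately. The martingale part is handled by a law of the iterated logarithm–type bound for weighted martingales, which needs the conditional moment of order $q>2/\alpha$ from \ref{Assumption oracle lyapunov} and yields $\mathcal O(\ln n / n^{\alpha})$. The quadratic remainder is absorbed once $\Delta_n\to 0$. The error from $C_{n-1}H-I_d$ is absorbed because it tends to $0$ a.s. When $\alpha=1$, the condition $1-\zeta>0$, equivalently $c_\alpha>1/2$, is exactly what keeps the deterministic part of the linear recursion from dominating the $n^{-1}\ln n$ term. I would invoke their theorem directly after checking its hypotheses, and obtain $\norm{\theta_n-\theta^*}^2=\mathcal O(\ln n/n^\alpha)$ a.s.

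\textbf{Rate for $A_n$.} Apply Proposition \ref{prop::cv rate of A_n} with $\hat\theta_n=\theta_n$. By \ref{Assumption fct: hessian lipschitz}, $\mu(r)\le Lr$ for small $r$. The first stage gives $\delta_n = \sup_{k\ge n}\norm{\theta_{k-1}-\theta^*} = \mathcal O\bigl(\sqrt{\ln n/n^\alpha}\bigr)$, since the bound is eventually decreasing. Hence
\[
\mu\bigl(\delta_{\lceil n/2\rceil}\bigr)^2=\mathcal O(\ln n/n^\alpha).
\]
Adding the variance term $\gamma_n(\ln n)^{1+\eta}=\mathcal O\bigl((\ln n)^{1+\eta}n^{-\gamma}\bigr)$ gives the claimed bound $\mathcal O\bigl((\ln n)^{1+\eta}/n^{\min\{\alpha,\gamma\}}\bigr)$, because $\ln n\le(\ln n)^{1+\eta}$. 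Note that $\gamma<1$ is required by the Proposition and is part of the standing assumption $\gamma\in(1/2,1)$.

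\textbf{Main obstacle.} The hard step is the first one. The cited results may require a rate for $C_n\to H^{-1}$, or a uniform lower bound on its eigenvalues, rather than mere a.s. convergence. If a rate is needed, I would bootstrap. Proposition \ref{prop::cv rate of A_n} with a crude bound on $\delta_n$ gives some polynomial rate for $A_n$. That in turn gives a first rate for $\theta_n$, which improves $\mu(\delta_n)$, and iterating sharpens both until the statement's exponents are reached. The term $\nu_n$ is harmless here because $\nu_n\to0$.
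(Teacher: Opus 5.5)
Your proposal is correct and follows the paper's route: the paper gets the rate for $\theta_n$ by applying Theorem 3.2 (case $\alpha=1$) and Theorem 4.2 (case $\alpha<1$, where $q>2/\alpha$ is used) of \cite{boyer_asymptotic_2023} once $A_n\to H^{-1}$ a.s. is known from Theorem \ref{thm::USNA}. It then gets the rate for $A_n$ from Proposition \ref{prop::cv rate of A_n} with $\mu(r)\le Lr$, exactly as you compute, so your bootstrap contingency is never needed.
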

\subsection{A Weighted Averaged Version}

We have seen that asymptotic efficiency can only be directly achieved by choosing $\alpha = 1$ and $c_\alpha = 1$, 
which corresponds to using steps of size $1/n$. 
However, this leads to very small updates, which can be problematic in the case of poor initialization 
(see \cite{cenac_efficient_2020,boyer_asymptotic_2023}).
To address this issue, one can choose $\alpha < 1$ to allow larger step sizes, 
and then apply an averaging step to recover asymptotic efficiency.
This averaging step can be applied to a wide range of conditioned SGD algorithms, including the mSNA algorithm.
For any estimator $(\theta_n)_{n \geq 0}$, we define the averaged estimator $(\bar \theta_n)_{n \geq 0}$ 
as a weighted average of the iterates $\theta_k$ for $k \leq n$, with weights of the form $\omega_k \coloneq (\ln (k+1))^{\tau}$ 
for a chosen $\tau \geq 0$ \cite{boyer_asymptotic_2023}:
\begin{align}
    \label{eq::def theta_n bar}
    \bar \theta_0 \coloneq \theta_0, \quad \text{and for } n \geq 1, \quad
    \bar \theta_n \coloneq \frac{\sum_{k=0}^n \omega_k \theta_k}{\sum_{k=0}^n \omega_k}.
\end{align}
With the convention $0^0 = 1$, taking $\tau = 0$ corresponds to the simple average of the iterates, 
while taking $\tau > 0$ enables to give more weights to the last estimates $\theta_{n}$.
This averaging step can be computed in a recursive manner:
\begin{align*}
    \bar \theta_n & =
    \left( 1 - \frac{\omega_n }{\sum_{k=0}^n \omega_k} \right) \bar \theta_{n-1}
    + \frac{\omega_n}{\sum_{k=0}^n \omega_k } \theta_n .
\end{align*}


\begin{remark}
Observe that since the averaged estimator $\bar \theta_n$ is expected to converge faster than the iterates $\theta_n$,
it is possible to query the Hessian oracle in $\bar \theta_n$ instead of $\theta_n$,
which modifies equation \eqref{eq::def H_n USNA} into
\begin{align}
    \label{eq::def H_n bar UWASNA}
        \tilde H_n & \coloneq M_{\ell,n} h_n(\bar \theta_{n-1}) .
\end{align}
However, the computations done to obtain an Hessian-vector product often require the gradient as intermediate step,
and for the update of $\theta_n$ the stochastic gradient must be computed at $\theta_n$. 
Therefore, querying the stochastic oracle of the Hessian at $\bar \theta_n$ instead of $\theta_n$ can even be less efficient.
\end{remark}

The following Theorem gives the strong consistency of the averaged estimates
(with either update \eqref{eq::def H_n USNA} or \eqref{eq::def H_n bar UWASNA}), 
their almost sure rate of convergence as well as their asymptotic efficiency.

\begin{theorem}\label{thm::UWASNA}
    Under Assumptions \ref{Assumption oracles unbiased} and \ref{Assumption expected smoothness}, 
    if $\alpha + \gamma > \frac{3}{2}$,
    the estimates $\theta_n$ and $\bar \theta_n$ defined by \eqref{def::theta_n USNA},
    \eqref{eq::def H_n USNA} (or \eqref{eq::def H_n bar UWASNA}), 
    \eqref{eq::def A_n USNA} and \eqref{eq::def theta_n bar}
    satisfy
    \[ \theta_{n} \xrightarrow[n\to + \infty]{a.s.} \theta^{*} \quad \text{and} \quad
        \bar \theta_{n} \xrightarrow[n\to + \infty]{a.s.} \theta^{*} . \]
    Assume also \ref{Assumption fct: hessian positive continuous},
    \ref{Assumption oracle lyapunov} with $q > \frac{2}{\alpha}$ and $q' > \frac{2}{\gamma}$,
    \ref{Assumption fct: hessian lipschitz}, and $\alpha < 1$.
    Then for any $\eta > 0$:
    \begin{align*}
        \lVert \theta_n - \theta^* \rVert^2 & = \mathcal O \left( \frac{\ln{n}}{n^\alpha} \right) \quad \text{and} \quad
        \lVert \bar \theta_n - \theta^* \rVert^2 = \mathcal O \left( \frac{\ln{n}}{n} \right) \quad \text{a.s.,}\\
        \norm{A_n - H^{-1} }_F^2 &= \mathcal O \left( \frac{(\ln{n})^{1+\eta}}{ n^{\min \{ \gamma, \alpha \}} } \right) \text{ a.s.}
        \quad \text{and} \quad
        \norm{ \bar A_n - H^{-1} }_F^2 = \mathcal O \left( \frac{(\ln{n})^{1+\eta}}{n} \right) \quad \text{a.s.}
    \end{align*}
    
    Moreover, with Assumption \ref{Assumption oracle variance limit}, we have
    \begin{align*}
        \sqrt{n} \left( \bar \theta_{n} - \theta^* \right) \xrightarrow[n \to +\infty]{\mathcal L} \mathcal N \left( 0, H^{-1} \Gamma H^{-1} \right).
    \end{align*}
\end{theorem}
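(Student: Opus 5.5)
The plan is to treat the averaged estimators as a post-processing layer over the non-averaged recursion, and to reuse Theorems \ref{thm::USNA} and \ref{thm::USNA convergence rates} and Proposition \ref{prop::cv rate of A_n} as far as they go.

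\textbf{Consistency.} The almost sure convergence of $\theta_n$ is the first part of Theorem \ref{thm::USNA}. That proof rests only on the positive definiteness of $A_{n-1}$ and on a Robbins--Siegmund argument applied to $F(\theta_n)-F(\theta^*)$. This argument uses the bound $\lambda_{\min}(A_{n-1}+\nu_n I_d)\geq \nu_n$ together with a control $\lambda_{\max}(A_n)=\mathcal O(\sum_k\gamma_k)$ obtained from \eqref{eq:factorization An}, and the condition $\alpha+\gamma>3/2$ is what makes $\sum_n \alpha_n^2\lambda_{\max}(A_{n-1})^2<\infty$. None of these ingredients depends on where the Hessian oracle is queried, so the argument applies verbatim with \eqref{eq::def H_n bar UWASNA}. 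Since the weights satisfy $\sum_{k\le n}\omega_k\to\infty$, Toeplitz's lemma then transfers the convergence $\theta_n\to\theta^*$ to $\bar\theta_n$.

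\textbf{Rates.} Proposition \ref{prop::cv rate of A_n} applies with $\hat\theta=\theta$ or $\hat\theta=\bar\theta$, and gives $A_n\to H^{-1}$ almost surely in either case. The rate for $\theta_n$ is then obtained exactly as in Theorem \ref{thm::USNA convergence rates}, via the results of \cite{boyer_asymptotic_2023,godichon-baggioni_adaptive_2025}, which require only that the conditioning matrix $A_{n-1}+\nu_n I_d$ converge almost surely to $H^{-1}$ and that $q>2/\alpha$. This yields $\norm{\theta_n-\theta^*}^2=\mathcal O(\ln n/n^\alpha)$.

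Under \ref{Assumption fct: hessian lipschitz}, $\mu(r)\le Lr$, and $\norm{\bar\theta_n-\theta^*}\le \sup_{k\ge n/2}\norm{\theta_k-\theta^*}+o(n^{-1/2})$ once the weights are accounted for. In both cases $\mu(\delta_{\lceil n/2\rceil})^2=\mathcal O(\ln n/n^\alpha)$, so the bound $\mathcal O((\ln n)^{1+\eta}n^{-\min\{\gamma,\alpha\}})$ for $A_n$ follows.

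\textbf{Averaging of $\theta_n$.} For $\bar\theta_n$, linearize the recursion as
\[
\theta_n-\theta^*=(I_d-\alpha_n A_{n-1}H)(\theta_{n-1}-\theta^*)-\alpha_n A_{n-1}\xi_n+\alpha_n r_n ,
\]
where $\xi_n=g_n(\theta_{n-1})-\nabla F(\theta_{n-1})$ is a martingale increment and $r_n$ collects three remainders: the Taylor remainder, which is $\mathcal O(\norm{\theta_{n-1}-\theta^*}^2)$ by \ref{Assumption fct: hessian lipschitz}; the term $\nu_n g_n$; and the term $(A_{n-1}-H^{-1})\nabla F(\theta_{n-1})$.

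Rewrite this as $H^{-1}(\theta_{n-1}-\theta^*)\approx(\theta_{n-1}-\theta_n)/\alpha_n+\ldots$, sum with weights $\omega_k$, and apply an Abel transform in the style of Polyak--Juditsky \cite{polyak_acceleration_1992,boyer_asymptotic_2023}. This gives
\[
\bar\theta_n-\theta^*=-\frac{1}{\sum_{k\le n}\omega_k}\sum_{k\le n}\omega_k H^{-1}\xi_k+\text{remainders}.
\]
The martingale term gives both the $\mathcal O(\ln n/n)$ rate (law of the iterated logarithm, or the martingale almost sure bounds of \cite{pelletier_almost_1998}) and, by the martingale CLT with \ref{Assumption oracle variance limit}, the limit $\mathcal N(0,H^{-1}\Gamma H^{-1})$. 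The logarithmic weights change the normalized variance only by a factor tending to $1$.

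\textbf{Main obstacle.} The hardest step is showing that all remainders are $o(n^{-1/2})$. The Taylor term is $\mathcal O(\sum\ln k/k^\alpha)/n=o(n^{-1/2})$ because $\alpha>1/2$. The term $\nu_k g_k$ has size $\nu_k=n^{\alpha-1}$, and its drift part is $\nu_k\nabla F(\theta_{k-1})$, whose summed contribution is $\mathcal O(n^{\alpha-1}n^{-\alpha/2})$, which is fine. The delicate term is $(A_{k-1}-H^{-1})H(\theta_{k-1}-\theta^*)$, whose size is $\mathcal O((\ln k)^{1+\eta}k^{-(\min\{\alpha,\gamma\}+\alpha)/2})$. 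Its average is $o(n^{-1/2})$ because $\alpha+\gamma>3/2$ and $2\alpha>1$, so $(\min\{\alpha,\gamma\}+\alpha)/2>1/2$. The Abel-transform boundary terms $(\theta_k-\theta^*)(1/\alpha_{k+1}-1/\alpha_k)$ are handled with $\alpha<1$.

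Finally, $\bar A_n$ is treated similarly. Its linearization is
\[
A_n-H^{-1}=A_{n-1}-H^{-1}-\gamma_n\tfrac{\ell}{d}\bigl(H(A_{n-1}-H^{-1})+(A_{n-1}-H^{-1})H\bigr)+\gamma_n\cdot\text{martingale}+\text{bias},
\]
with bias $\mathcal O(\gamma_n\norm{\theta_{n-1}-\theta^*})$ and quadratic terms of order $\gamma_n^2$. Averaging removes the drift, and the bias averages to $\mathcal O(\sqrt{\ln n/n})$. This yields $\mathcal O((\ln n)^{1+\eta}/n)$ provided $\sum\gamma_k^2\norm{\cdot}$ is small. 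Verifying that this quadratic term is small enough is the other point I expect to require care.
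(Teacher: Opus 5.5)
The genuine gap is in your $\bar A_n$ step; a second, repairable flaw is in the consistency step.

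Your consistency argument, the rates for $\theta_n$ and $A_n$ via Proposition~\ref{prop::cv rate of A_n}, and the Toeplitz step follow the paper's route. For $\bar\theta_n$ you replace the paper's citation of Theorem~3 of \cite{godichon-baggioni_adaptive_2025} by an explicit Polyak--Juditsky argument, and that bookkeeping is sound. The cross term needs only $\alpha+\min\{\alpha,\gamma\}>1$, and the $\nu_k$ and Abel boundary terms need $\alpha<1$. The identity should read $\theta_{k-1}-\theta^*=(\theta_{k-1}-\theta_k)/\alpha_k-A_{k-1}\xi_k+\dots$, with $\sum_k\omega_k(A_{k-1}-H^{-1})\xi_k$ as an extra negligible martingale.

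\textbf{The $\bar A_n$ step.} You assert that the bias averages to $\mathcal O(\sqrt{\ln n/n})$. With the Hessian queried at $\theta_{k-1}$, as in \eqref{eq::def H_n USNA}, the bias is $\mathcal O(\norm{\theta_{k-1}-\theta^*})=\mathcal O(\sqrt{\ln k}\,k^{-\alpha/2})$. Assumption \ref{Assumption fct: hessian lipschitz} gives no linear structure that averaging could cancel, so the weighted mean of the bias is only $\mathcal O(\sqrt{\ln n}\,n^{-\alpha/2})$.

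This is not a loose estimate. Take $d=\ell=1$, a deterministic Hessian oracle, and $F''(\theta)=H+L\min\{|\theta-\theta^*|,1\}$.
\begin{itemize}
\item The bias $2LA_{k-1}\min\{|\theta_{k-1}-\theta^*|,1\}$ is nonnegative, and $|\theta_k-\theta^*|$ has typical size $\sqrt{\alpha_k}$.
\item The $\gamma_k^2$ contribution is $\mathcal O(n^{-\gamma})$ and the Abel boundary contribution is $\mathcal O(n^{\gamma-1-\alpha/2})$; both are $o(n^{-\alpha/2})$.
\item Hence $\bar A_n-H^{-1}$ is of order $n^{-\alpha/2}$, and $\norm{\bar A_n-H^{-1}}_F^2$ is not $\mathcal O((\ln n)^{1+\eta}/n)$ when $\alpha<1$.
\end{itemize}

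Your sketch does work for the variant \eqref{eq::def H_n bar UWASNA}.
\begin{itemize}
\item There the bias is $\mathcal O(\norm{\bar\theta_{k-1}-\theta^*})=\mathcal O(\sqrt{\ln k/k})$, which does average to $\mathcal O(\sqrt{\ln n/n})$.
\item The $\gamma_k^2\tilde H_kA_{k-1}\tilde H_k^T$ and truncation terms you were worried about contribute $\mathcal O(\gamma_k)$ to the drift, so they average to $\mathcal O(n^{-\gamma})=o(n^{-1/2})$.
\item The boundary terms are $\mathcal O(n^{\gamma/2-1})$ up to logarithms.
\end{itemize}
The paper's own one-line appeal to Theorem~\ref{thm::new_robbins_monro} with \ref{Assumption fct: hessian lipschitz} controls only the non-averaged $A_n$, so it does not close this point either. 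The $\bar A_n$ rate should be restricted to the $\bar\theta$-query variant.

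\textbf{The consistency step.} The bound $\lambda_{\max}(A_n)=\mathcal O(\sum_k\gamma_k)$ does not follow from \eqref{eq:factorization An}. The factor $I_d-\gamma_n\tilde H_n$, with $\tilde H_n=M_{\ell,n}h_n$, is not symmetric and in general is not a contraction, even when $h_n\succeq 0$. The symmetric part of $M_{\ell,n}h_n$ vanishes on the unselected block, so it is indefinite as soon as $h_n$ couples selected and unselected coordinates; moreover $h_n$ need not be positive at all. A pathwise estimate therefore only gives growth like $\exp(c\sum_k\gamma_k)$.

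The contraction holds only in conditional expectation, which is how the paper argues in Lemma~\ref{lemma::biggest eigenvalue A_n}. It applies Robbins--Siegmund to $\E[\norm{A_n}_F^2\mid\F_{n-1}]$, using that $\E[\tilde H_n\mid\F_{n-1}]$ is $\tfrac{\ell}{d}$ times a Hessian of the convex $F$. This gives $\lambda_{\max}(A_n)=\mathcal O(n^{1-\gamma}(\ln n)^{1+\eta})$. With this bound your condition $\sum_n\alpha_n^2\lambda_{\max}(A_{n-1})^2<\infty$ still reduces to $\alpha+\gamma>3/2$. The lemma does not depend on where the Hessian is queried, so your transfer of consistency to \eqref{eq::def H_n bar UWASNA} becomes legitimate.

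\textbf{A minor false bound.} The inequality $\norm{\bar\theta_n-\theta^*}\le\sup_{k\ge n/2}\norm{\theta_k-\theta^*}+o(n^{-1/2})$ is false, because the first half of the average contributes order $\sqrt{\ln n}\,n^{-\alpha/2}$. You only need $\norm{\bar\theta_n-\theta^*}=\mathcal O(\sqrt{\ln n/n^{\alpha}})$, which follows directly by averaging the rate of $\theta_n$.
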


Notably, if $\alpha < 1$, the averaged estimator attains the asymptotic efficiency for any $c_{\alpha} > 0$.
It is then possible to slow down the decrease of the step-size $\alpha_n$, by taking a larger $c_{\alpha}$ 
balanced by a larger $c_{\alpha}'$, and still have an asymptotically efficient algorithm.

\section{Streaming Newton algorithms} \label{sec:streaming}

\subsection{The Algorithm}

In this section, we focus on a streaming version of the mSNA algorithm, 
with potentially $\mathcal O(Nd)$ total operations for one pass over the data.
To this end, we consider the case where the objective function is the expectation of a known function $f$ twice differentiable in its second argument $\theta$:
\[ F(\theta) = \E \left[ f(\xi, \theta) \right] , \]
where $\xi$ is a random variable with values in a measured space $\mathcal X$.
Following the idea presented in \citep{godichon-baggioni_adaptive_2025}, 
we assume from now that at each iteration $n$, 
we have access to $b$ new i.i.d copies of $\xi$ arriving in a block $\{ \xi_{n,1}, \ldots , \xi_{n,b} \}$.
The streaming mSNA algorithm (and its weighted averaged version) are then defined 
by using the following stochastic oracles within the updates \eqref{def::theta_n USNA} and \eqref{eq::def A_n USNA}:
\begin{align}
    \label{eq::streaming oracles}
    g_n( \theta) = \frac{1}{b} \sum_{i=1}^{b} \nabla f(\xi_{n,i}, \theta)                
    \quad \text{and} \quad
    h_n(\theta) = \frac{1}{b} \sum_{i=1}^{b} \nabla^2 f(\xi_{n,i}, \theta) .
\end{align}
Considering $N=nb$ as the total number of samples processed after $n$ iterations,
the number of iterations required to process $N$ samples is $N / b$.
This is a factor $b$ fewer than in the purely online ($b=1$) setting,
leading to a reduction in total computational complexity, as discussed below.
{
    Although the number of iterations is reduced by a factor of $b$,
    the covariance of the mini-batch oracles \eqref{eq::streaming oracles} is also reduced by a factor of $b$.
    Consequently, the limiting covariance matrix $\Gamma$ from Assumption \ref{Assumption oracle variance limit} 
    becomes $\Sigma / b$, where $\Sigma := \text{Cov} \left[ \nabla f \left( \xi , \theta^{*} \right) \right]$
    is the covariance matrix for a single sample defined in Assumption \ref{Assumption oracle variance limit prime}.
}
The estimates are still asymptotically efficient, which is given by the following corollary 
of Theorems \ref{thm::USNA} and \ref{thm::UWASNA}.

\begin{corollary}\label{cor::streaming SNA}
    Under Assumptions 
    \ref{Assumption oracles unbiased prime} to \ref{Assumption oracle variance limit prime} (given in Appendix \ref{ass::B}) made on the oracles defined by \eqref{eq::streaming oracles},
    and assuming that \ref{Assumption oracle lyapunov prime}\ref{Assumption:Lyapunov_H prime} holds with $q' > \frac{2}{\gamma}$,
    the estimator $\theta_n$ of the streaming mSNA algorithm with step-size $\alpha_n = 1 / (n + c_{\alpha}')$ 
    and mini-batch size $b$ is asymptotically efficient:
    \begin{align*}
        \sqrt{N} \left( \theta_{n} - \theta^* \right) 
        & \xrightarrow[n \to +\infty]{\mathcal L} 
        \mathcal N \left( 0, H^{-1} \Sigma H^{-1} \right),
    \end{align*}
    where $\Sigma := \text{Cov} \left[ \nabla f \left( \xi , \theta^{*} \right) \right]$.
    Moreover, with Assumption \ref{Assumption fct: hessian lipschitz}, the estimator $\bar \theta_n$ of the streaming averaged mSNA algorithm 
    with step-size exponent $\alpha < 1$ and with mini-batch size $b$
    and assuming that \ref{Assumption oracle lyapunov prime} holds with $q > \frac{2}{\alpha}$ and $q' > \frac{2}{\gamma}$
    is asymptotically efficient:
    \begin{align*}
        \sqrt{N} \left( \bar{\theta}_{n} - \theta^* \right) 
        & \xrightarrow[n \to +\infty]{\mathcal L} 
        \mathcal N \left( 0, H^{-1} \Sigma H^{-1} \right) .
    \end{align*}
\end{corollary}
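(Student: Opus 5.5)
The plan is to reduce Corollary \ref{cor::streaming SNA} to Corollary \ref{cor::efficiency USNA} (for $\theta_n$) and to Theorem \ref{thm::UWASNA} (for $\bar\theta_n$). This requires two things. First, we check that the mini-batch oracles \eqref{eq::streaming oracles} satisfy Assumptions \ref{Assumption oracles unbiased}--\ref{Assumption oracle variance limit} (and \ref{Assumption fct: hessian lipschitz} where needed) with $\Gamma = \Sigma/b$. Second, we convert the rate in $n$ into a rate in $N = nb$.

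\textbf{Step 1: transfer of the assumptions.} I will take the filtration $\mathcal F_n$ generated by the blocks $\{\xi_{k,i}\}_{k\le n}$ and the masks $M_{\ell,k}$, $k\le n$. Since the block at step $n$ is i.i.d.\ and independent of $\mathcal F_{n-1}$, while $\theta_{n-1}$ is $\mathcal F_{n-1}$-measurable, the conditional expectations reduce to ordinary expectations over $\xi$ evaluated at a frozen $\theta$. Unbiasedness \ref{Assumption oracles unbiased} then follows from the primed unbiasedness assumption by linearity.

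For the growth condition \ref{Assumption expected smoothness}, I will use convexity of the squared norm: $\norm{\frac1b\sum_i v_i}^2 \le \frac1b\sum_i \norm{v_i}^2$, and likewise for $\norm{\cdot}_{op}^2$. This gives the same constants $\mathcal L_g, \sigma^2, \mathcal L_h$ as in the single-sample assumptions.

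For the Lyapunov moments \ref{Assumption oracle lyapunov}, I will write $g_n(\theta)-\nabla F(\theta) = \frac1b\sum_i(\nabla f(\xi_{n,i},\theta)-\nabla F(\theta))$. The $q$-th moment is then bounded by Minkowski's inequality, or again by convexity of $x\mapsto\norm{x}^q$, by the single-sample $q$-th moment. The supremum over the ball therefore stays finite, and the same argument applies to the Hessian with $q'$.

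\textbf{Step 2: the covariance limit, which is the main point.} Independence of the $b$ samples gives
\[
\mathrm{Cov}(g_n(\theta_{n-1})\mid\mathcal F_{n-1}) = \frac1b\,\Sigma(\theta_{n-1}),
\qquad \Sigma(\theta) := \mathrm{Cov}\left[\nabla f(\xi,\theta)\right].
\]
Assumption \ref{Assumption oracle variance limit prime} should provide that $\Sigma(\theta)\to\Sigma$ as $\theta\to\theta^*$, or directly the almost sure convergence along a converging sequence. On the event $\theta_n\to\theta^*$, which holds almost surely by Theorem \ref{thm::USNA} (respectively Theorem \ref{thm::UWASNA}), this yields \ref{Assumption oracle variance limit} with $\Gamma=\Sigma/b\in\mathcal S_d^{++}$.

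The main obstacle is that the primed assumptions in Appendix \ref{ass::B} must be formulated pointwise in $\theta$ uniformly enough to survive evaluation at the random point $\theta_{n-1}$. I would handle this through the independence of the block from $\mathcal F_{n-1}$, using the freezing lemma for conditional expectations.

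\textbf{Step 3: conclusion.} With $\alpha_n=1/(n+c_\alpha')$, Corollary \ref{cor::efficiency USNA} gives
\[
\sqrt n(\theta_n-\theta^*)\xrightarrow{\mathcal L}\mathcal N(0,H^{-1}\tfrac{\Sigma}{b}H^{-1}).
\]
Multiplying by the constant $\sqrt b$ and using $\sqrt N=\sqrt b\sqrt n$, Slutsky's lemma (or simply the continuous mapping theorem) gives the claimed limit $\mathcal N(0,H^{-1}\Sigma H^{-1})$. For $\bar\theta_n$ with $\alpha<1$, Theorem \ref{thm::UWASNA} applies under \ref{Assumption fct: hessian lipschitz} and the transferred Lyapunov conditions with $q>2/\alpha$ and $q'>2/\gamma$. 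It gives $\sqrt n(\bar\theta_n-\theta^*)\to\mathcal N(0,H^{-1}\tfrac{\Sigma}{b}H^{-1})$, and the same rescaling by $\sqrt b$ concludes.
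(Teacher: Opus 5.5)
Your proposal is correct and follows the same route as the paper, which only sketches this argument in the surrounding text and Appendix~\ref{ass::B}: transfer (B1)--(B5) to the mini-batch oracles with $\Gamma = \Sigma/b$, apply Corollary~\ref{cor::efficiency USNA} and Theorem~\ref{thm::UWASNA}, then rescale by $\sqrt{b}$ using $N = nb$. Your Jensen/Minkowski and freezing-lemma checks fill in details the paper leaves implicit; note that, as in the paper, a few points are tacit: the averaged part also needs $\alpha+\gamma>3/2$, $\Gamma\in\mathcal S_d^{++}$ requires $\Sigma$ positive definite (which (B5) does not state), and (B1) is only a differentiability assumption, so unbiasedness relies on differentiating under the expectation.
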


Notably, the streaming mSNA algorithm with $\alpha_n \coloneq 1 /(n + c_{\alpha}')$, 
as well as the streaming averaged mSNA algorithm
with any $(\alpha, c_{\alpha}, c_{\alpha}')$-sequence step-size with $\alpha < 1$, 
attain both the asymptotic efficiency 
and possess the same asymptotic distribution as the purely online versions with batch-size $b=1$. 
Observe that in  the case of the streaming averaged mSNA, 
one can take $c_{\alpha}=b^{a}$ with $a \in [0,1]$ (usually $a = 1-\alpha$). 
This enables larger steps when the number of iterations decreases 
(see \citep{godichon-baggioni_non-asymptotic_2023} for more details).

\subsection{Discussion on the Computational Complexity}

The per-iteration workload of our algorithm can be summarized as follows:

\paragraph{Computing $\tilde{H}_{n}$.}
 {
The complexity of computing the selected columns of the Hessian oracle depends on the function $f$.
We assume that computing the product of the Hessian $\nabla^2 f(\xi_{n,i}, \theta_{n-1})$
with a single vector (a Hessian-vector product) can be done efficiently,
without forming the full matrix (see, e.g., \citep{pearlmutter_fast_1994}).
Computing the $\ell$ columns selected by $M_{\ell,n}$ for a single sample $\xi_{n,i}$
thus requires $\mathcal O(\ell d)$ operations in many standard cases.
Computing $\tilde{H}_{n}$ involves computing these $\ell$ columns for each of the $b$ samples and averaging them,
resulting in a complexity of order $\mathcal O(\ell b d)$. 
}

\paragraph{Calculating $\norm{ \tilde{H}_{n} }_{op}$.} 
The calculation of the operator norm of $\tilde{H}_{n}$ requires $\mathcal O(\ell^2 d)$ operations,
as only $\ell$ columns of $\tilde{H}_{n}$ are non-zero.
As long as $\ell \leq \sqrt{d}$, this is of order $\mathcal O(d^{2})$.

\paragraph{Updating $A_{n}$.} 
Calculating the products $A_{n-1} \tilde H_n^T$ and $\tilde H_n A_{n-1} \tilde H_n^T$ requires $\mathcal O(\ell d^2)$ operations.
The overall update for $A_n$ is thus dominated by these matrix multiplications,
requiring $\mathcal O(\ell d^2)$  operations.

\paragraph{Updating $\theta_{n}$.} 
This first requires $\mathcal O(bd)$ operations to calculate the gradient oracle $g_n(\theta_{n-1})$, 
which is an average over the mini-batch.
Computing the product $\left(A_{n-1} + \nu_n I_d \right) g_n(\theta_{n-1})$ 
then requires $\mathcal{O}(d^{2})$ operations.
The total cost is $\mathcal{O}(bd + d^{2})$.

\paragraph{Updating $\bar \theta_{n}$.} This operations requires $\mathcal O(d)$ operations 
(vector scaling and addition).

\paragraph{Total cost of an iteration of the algorithm}
Summing these costs, the total cost per iteration is:
\[
    \underbrace{\mathcal O \left( \ell b d + \ell d^2  \right)}_{\text{Update $A_{n}$}}
    + \underbrace{\mathcal O \left( b d + d^{2} \right)}_{\text{Update $\theta_{n}$}}
    + \underbrace{\mathcal  O \left( d \right)}_{\text{Update $\bar \theta_{n}$ (if used)}}
    = \mathcal{O} \left( \ell \left( b d + d^2 \right) \right)
\]
Therefore, as long as the batch-size $b$ is not larger than the dimension $d$, 
the cost of an iteration is of order $\mathcal O(\ell d^2)$.
In a streaming setting with a single pass over $N$ data points, 
the algorithm performs $N/b$ iterations, leading to a total complexity of $\mathcal O(\frac{N}{b} \ell d^2)$.
Hence, by choosing $b=d$ and $\ell =1$, the algorithm achieves a total complexity
for one pass of order $\mathcal O(Nd)$ operations.

\section{Numerical Experiments}\label{sec:numerical_experiments}

In this section, we empirically evaluate the performance of the proposed
masked Stochastic Newton Algorithm (mSNA) and its weighted averaged variant,
particularly focusing on the streaming setting.
We compare our methods against Stochastic Gradient Descent (SGD) and its averaged
variant on both synthetic and real-world datasets. 
Our implementation uses the PyTorch framework and the code is available at \url{https://github.com/guillaume-salle/SNA}.
All experiments were conducted on a commodity laptop, with computations performed
on a multi-core CPU using 6 threads for parallelism; no GPU acceleration was utilized.
Although our simulations have focused on classical examples of linear and logistic regression,
there are many other applications, see for instance \cite{godichon-baggioni_online_2025}.

\subsection{Experimental Setup}

We compare the performance of the following algorithms:

\begin{itemize}
    \item \textbf{SGD:} The standard stochastic gradient descent update defined by 
    \begin{equation}\label{eq::sgd}
        \theta_n = \theta_{n-1} - \alpha_n g_n(\theta_{n-1}).
    \end{equation}
    \item \textbf{Averaged SGD:} The weighted-average variant with estimator $\bar \theta_n$ defined by \eqref{eq::def theta_n bar}.
    \item \textbf{mSNA:} Our proposed masked Stochastic Newton Algorithm defined by \eqref{def::theta_n USNA}, \eqref{eq::def H_n USNA}, and \eqref{eq::def A_n USNA}.
    \item \textbf{Averaged mSNA:} The weighted-average variant of our method with estimator $\bar \theta_n$ defined by \eqref{eq::def theta_n bar}.
\end{itemize}

For all optimizers, we use a batch size equal $b$ to the dimension $d$ of $\theta$.

For non averaged methods SGD and mSNA, we use the standard step size $\alpha_n \coloneq 1 / (n + n_0)$.
For the averaged SGD and averaged mSNA methods, we use the larger step size 
$\alpha_n \coloneq d^{0.25} / (n^{0.75} + d^{0.25} * n_0)$, and $\tau = 2$ for the averaging step \eqref{eq::def theta_n bar}.
The hyper-parameter $n_0$ determines the initial step size $\alpha_1$,
and is linked to the expected smoothness constant $\mathcal{L}$ of the objective function. 
It is set to $n_0 = d$ for synthetic data, and tuned by line search for real data.

In addition, in the case of the mSNA and averaged mSNA algorithms,
the parameter $\ell$ determines the number of columns and lines of the conditioning matrix $A_n$ that are updated at each iteration.  
Three choices of $\ell$ are considered: $\ell =1, d^{0.25},\sqrt{d}$. 
We use the step size $\gamma_n = 1 / (n^{0.75} + n_0)$ for the conditioning matrix update 
\eqref{eq::def A_n USNA}, and the regularization parameter $\nu = 0$ for the $\theta_n$ update \eqref{def::theta_n USNA}.
Note that for averaged mSNA, this choice of $\gamma = 0.75$ is the edge case of the condition $\alpha + \gamma > 3/2$
of Theorem \ref{thm::UWASNA} for the convergence of $\theta_n$.

\subsection{Experiments with Synthetic Data} \label{ssec:synthetic_data}

To assess the performance of the different methods, 
we analyze the evolution of the quadratic error $\|\theta_n - \theta^*\|^2$ 
with respect to the sample size. 
We also examine the evolution of the squared Frobenius norm error $\|A_n - H^{-1}\|_F^2$ 
for the inverse Hessian estimates, as well as the total computational time of the different algorithms. 
Observe that for both methods, the initialization $\theta_{0}$ is randomly chosen on the unit sphere centered at $\theta^*$ 
and $A_{0}$ is set to $I_{d}$.

\subsubsection{Linear Regression} \label{sssec:linear_regression_synthetic}

We consider a linear regression problem where 
$F(\theta) = \frac{1}{2} \mathbb{E}[(Y - X^T\theta)^2]$, with $\theta \in \mathbb{R}^{d}$.
It is an interesting case since the Hessian is known and constant, given by $\nabla^2 F(\theta) = \mathbb{E}[XX^T]=: \Sigma_{X}$
for all $\theta \in \mathbb{R}^d$, and we can so calculate explicitly $H^{-1}$.
To create a challenging scenario, we consider the following model:
\begin{itemize}
    \item 
    $X$ follows a multivariate Gaussian distribution $\mathcal{N}(0, \Sigma_X)$. 
    The covariance matrix $\Sigma_X$ is constructed to be ill-conditioned. 
    Specifically, $\Sigma_X = U \Lambda U^T$, where $\Lambda$ is a diagonal matrix with eigenvalues spaced evenly 
    between $10^{-2}$ and $1$, and $U$ is a random orthogonal matrix, 
    yielding a condition number of $10^2$ for $\Sigma_X$. 
    This construction ensures an ill-conditioned covariance matrix with non-axis-aligned principal
    components, making the optimization problem challenging even for conditioned SGD methods that
    use a diagonal conditioning matrix.
    \item 
    $Y = X^T \theta^* + \epsilon$, 
    where the parameter     $\theta^*$ is randomly chosen from a standardized Gaussian distribution 
    and $\epsilon \sim \mathcal{N}(0,1)$.
\end{itemize}
We take $d = 1000$ and $N = 10^7$, 
in order to have a moderately large dimension while still having
a reasonable number of iterations of the algorithms with a batch size equal to $d$.

\begin{figure}[htbp]
    \centering
    \includegraphics[width=0.95\textwidth]{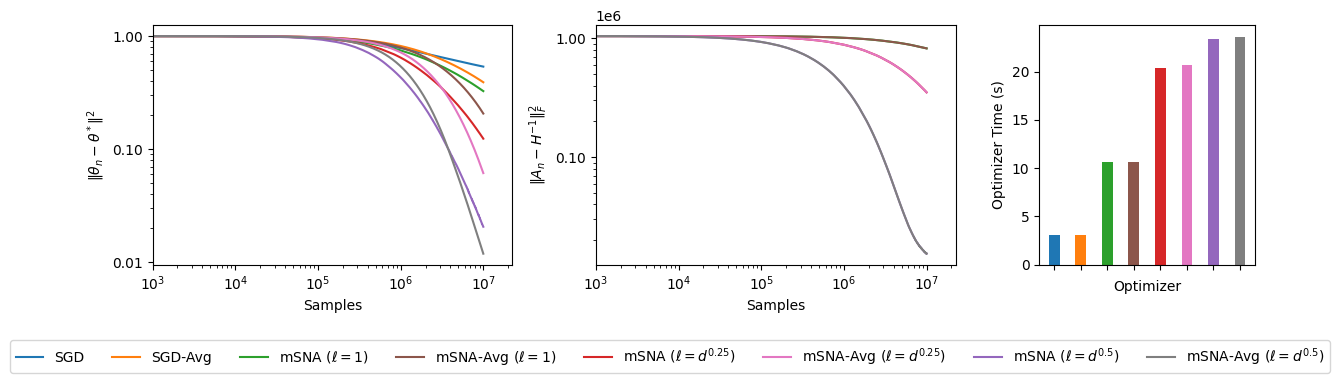}
    \caption{Linear regression with  $d=1000$. From the left to the right, comparison  of: evolution of the quadratic errors $\norm{ \theta_{n} - \theta^{*} }^{2}$,  quadratic errors $\norm{ A_{n} - H^{-1} }_{F}^{2}$, computational time for the different methods.   }
    \label{fig:linear_regression_synthetic}
\end{figure}

In Figure \ref{fig:linear_regression_synthetic}, 
we can see that, as expected in an ill-conditioned setting, 
standard gradient-based methods fail to converge effectively.
In contrast, Newton-type methods demonstrate significantly better performance, with only a slightly higher computational cost.
For all algorithmic variants considered, we observe that the averaged versions consistently outperform their non-averaged counterparts. 
As  anticipated, increasing the mask size $\ell$ leads to improved performance, albeit at the cost of increased computation time.

\subsubsection{Logistic Regression}

We consider here a logistic regression problem:
\begin{equation*}
    F(\theta) = \mathbb{E}[\log(1 + \exp(X^T \theta)) - X^T \theta Y], 
\end{equation*}
with $X \sim \mathcal{N}(0, \Sigma_X)$ as in Section \ref{sssec:linear_regression_synthetic}
and $Y |X \sim \mathcal{B}(\sigma(X^T \theta^*))$,
where $\sigma(x) = \frac{e^{x}}{1+e^{x}}$ 
is the sigmoid function.

For this case, we do not have a closed-form expression for the Hessian at the minimizer:
$H = \nabla^2 F(\theta^*) = \mathbb{E}[\sigma(X^T \theta^*) (1 - \sigma(X^T \theta^*)) XX^T]$.
Using a Monte-Carlo approach,
we compute an empirical estimate of $H$ on a large sample, 
and use it to compute an estimate of the inverse $H^{-1}$.

\begin{figure}[H]
    \centering
    \includegraphics[width=0.95\textwidth]{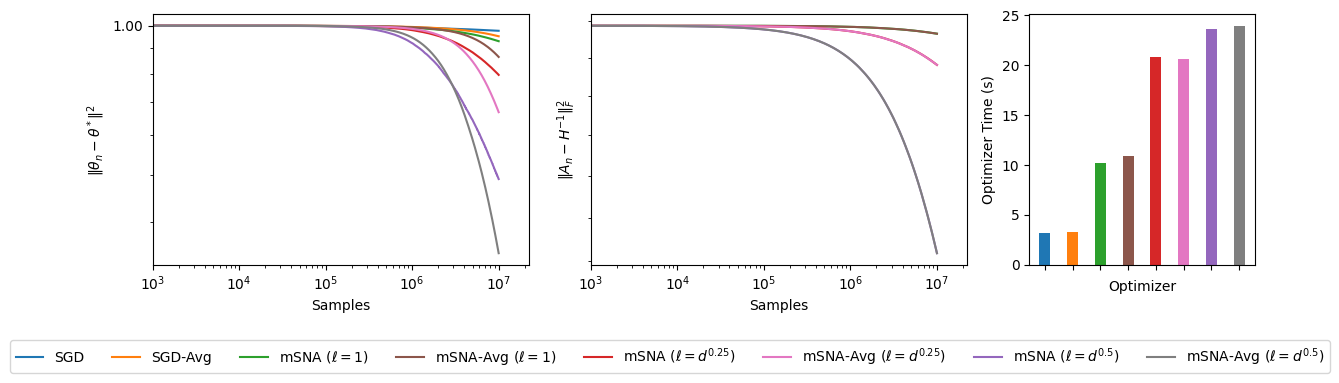}
    \caption{
        Logistic regression with  $d=1000$. 
        From the left to the right, comparison of: evolution of the quadratic errors $\| \theta_{n} - \theta^{*}\|^{2}$,  quadratic errors $\| A_{n} - H^{-1}\|_{F}^{2}$, computational time for the different methods.
    }
    \label{fig:logistic_regression_synthetic}
\end{figure}

In Figure \ref{fig:logistic_regression_synthetic}, 
we observe similar trends as in the linear regression setting. 
In the case of logistic regression, the Hessian $H$ exhibits very small
eigenvalues and a large conditioning number, which further diminish the performance of the SGD optimizer.

\subsection{Ridge logistic regression on Real Data}
\label{ssec:real_data}

We evaluate the algorithms on real-world datasets for binary classification using Ridge logistic regression. 
The characteristics of the studied datasets are summarized in Table \ref{tab:dataset_characteristics}. 

\begin{table}[h!]
\caption{Key characteristics of the datasets used in this study.}
\label{tab:dataset_characteristics}
\begin{tabular}{lrrrr}
\toprule
 & Dimension & Training Set Size & Init Set Size & Testing Set Size \\
\midrule
Adult & 98 & 38,682 & 390 & 9,769 \\
Connect-4 & 85 & 53,504 & 540 & 13,512 \\
Covtype & 55 & 460,160 & 4,648 & 116,203 \\
Mnist & 785 & 54,430 & 1,570 & 14,000 \\
Mushrooms & 95 & 6,309 & 190 & 1,625 \\
Phishing & 39 & 8,755 & 88 & 2,211 \\
\bottomrule
\end{tabular}
\end{table}

Each  data set is split into training and test sets. 
To evaluate the performance of the different methods, 
we consider both the prediction accuracy and the empirical loss function
evaluated both on the train set and the test set.

Adult \citep{adult_2} contains census data for income
prediction and includes multiple categorical variables converted into binary indicators. 
Connect-4 \citep{connect-4_26} is a dataset for the Connect-4 game, 
where the goal is to predict the next move based on the current board state.
COVTYPE \citep{covertype_31} originally includes multiple forest cover
types; in this study, we focus on distinguishing “Spruce/Fir” (labeled as 1)
from all other categories (labeled as 0). 
MNIST \citep{lecun_gradient-based_1998} is a dataset for handwritten digit recognition, 
where we focus on distinguishing the digit '$0$' (labeled as $1$) from all other digits (labeled as $0$).
Mushrooms \citep{mushroom_73} consists of morphological attributes of mushrooms used to determine their edibility. 
Phishing \citep{phishing_websites_327} is designed to detect
malicious websites and consists entirely of categorical features, which we
encode in binary form. 
These datasets are frequently adopted to benchmark binary classifiers 
\citep{toulis_asymptotic_2017}.

Our objective is to apply a ridge regression model to predict the binary response variable for each  dataset.
In the ridge regression, the objective function is defined by
\begin{equation*}
    F(\theta) =  \E \left[  \log(1 + \exp(X^T \theta)) - X^T \theta Y + \frac{\lambda}{2} \norm{\theta}^2 \right]
\end{equation*}
where $Y \in \{0, 1\}$, $X \in \R^{d}$  and $\lambda > 0$ is an $L_2$-regularization parameter. 
For all optimizers, we perform an initialization for the initial estimator $\theta_0$ by doing 
$100$ steps of gradient descent over an initial batch of $\max \lbrace N/100, 2d \rbrace$ data, 
starting at $0$ and using a constant learning rate found by line search. 
We do not include this initialization of $\theta_0$ in the computational time of the algorithms, 
since it is shared by all optimizers.

For the mSNA and averaged mSNA optimizers, we also perform an initialization for
the conditioning matrix $A_0$ by computing on the same batch an estimator of the hessian of $F$ at $\theta_0$ 
obtained before, and we inverse it. 
This initialization has a computational cost of $O(d^3)$, and is included in the computational time.

\begin{table}[!htbp]
\caption{Performance of streaming optimizers on various datasets.}
\label{tab:performance_results}
\scriptsize
\begin{tabular}{llrrrrr}
\toprule
Dataset & Optimizer & Train Acc & Test Acc & Train Loss & Test Loss & Time \\
\midrule
\multirow[c]{8}{*}{adult} & SGD-Avg & \bfseries 76.86 & \bfseries 76.87 & 5.72e-01 & 5.75e-01 & 31.8 ms \\
 & SGD & 76.75 & 76.72 & \bfseries 5.67e-01 & \bfseries 5.70e-01 & \bfseries 27.0 ms \\
 & mSNA-Avg ($\ell=0.25$) & 76.17 & 76.17 & 5.74e-01 & 5.77e-01 & 151.4 ms \\
 & mSNA-Avg ($\ell=0.5$) & 76.17 & 76.17 & 5.74e-01 & 5.77e-01 & 149.8 ms \\
 & mSNA-Avg & 76.17 & 76.17 & 5.74e-01 & 5.77e-01 & 78.6 ms \\
 & mSNA ($\ell=0.25$) & 76.17 & 76.17 & 5.74e-01 & 5.77e-01 & 131.3 ms \\
 & mSNA ($\ell=0.5$) & 76.17 & 76.17 & 5.74e-01 & 5.77e-01 & 146.7 ms \\
 & mSNA  & 76.17 & 76.17 & 5.74e-01 & 5.77e-01 & 78.1 ms \\
\midrule
\multirow[c]{8}{*}{connect-4} & SGD-Avg & 77.04 & 76.72 & 4.68e-01 & 4.69e-01 & \bfseries 42.2 ms \\
 & SGD & 77.00 & 76.61 & 4.69e-01 & 4.70e-01 & 44.8 ms \\
 & mSNA-Avg ($\ell=0.25$) & \bfseries 79.26 & \bfseries 79.18 & 4.38e-01 & 4.39e-01 & 205.0 ms \\
 & mSNA-Avg ($\ell=0.5$) & 79.26 & 79.18 & \bfseries 4.38e-01 & \bfseries 4.39e-01 & 217.4 ms \\
 & mSNA-Avg  & 79.25 & 79.17 & 4.38e-01 & 4.39e-01 & 123.5 ms \\
 & mSNA ($\ell=0.25$) & 79.16 & 79.08 & 4.39e-01 & 4.41e-01 & 201.5 ms \\
 & mSNA ($\ell=0.5$) & 79.14 & 79.10 & 4.39e-01 & 4.41e-01 & 217.8 ms \\
 & mSNA & 79.15 & 79.05 & 4.39e-01 & 4.41e-01 & 119.8 ms \\
\midrule
\multirow[c]{8}{*}{covtype} & SGD-Avg & \bfseries 58.36 & \bfseries 58.23 & \bfseries 6.66e-01 & \bfseries 6.66e-01 & 527.6 ms \\
 & SGD & 57.56 & 57.35 & 6.70e-01 & 6.71e-01 & \bfseries 494.9 ms \\
 & mSNA-Avg ($\ell=0.25$) & 55.64 & 55.68 & 6.79e-01 & 6.79e-01 & 2.35 s \\
 & mSNA-Avg ($\ell=0.5$) & 55.64 & 55.68 & 6.79e-01 & 6.79e-01 & 2.22 s \\
 & mSNA-Avg  & 55.64 & 55.68 & 6.79e-01 & 6.79e-01 & 1.51 s \\
 & mSNA ($\ell=0.25$) & 55.62 & 55.65 & 6.79e-01 & 6.79e-01 & 2.31 s \\
 & mSNA ($\ell=0.5$) & 55.62 & 55.65 & 6.79e-01 & 6.79e-01 & 2.19 s \\
 & mSNA & 55.62 & 55.65 & 6.79e-01 & 6.79e-01 & 1.47 s \\
\midrule
\multirow[c]{8}{*}{mnist} & SGD-Avg & 86.76 & 86.66 & 5.05e-01 & 5.21e-01 & 23.8 ms \\
 & SGD & \bfseries 86.95 & \bfseries 87.11 & \bfseries 3.09e-01 & \bfseries 3.14e-01 & \bfseries 19.0 ms \\
 & mSNA-Avg ($\ell=0.25$) & 86.08 & 85.76 & 3.28e-01 & 3.35e-01 & 102.3 ms \\
 & mSNA-Avg ($\ell=0.5$) & 86.08 & 85.76 & 3.28e-01 & 3.35e-01 & 98.8 ms \\
 & mSNA-Avg  & 86.08 & 85.80 & 3.28e-01 & 3.35e-01 & 65.0 ms \\
 & mSNA ($\ell=0.25$) & 85.75 & 85.49 & 3.34e-01 & 3.41e-01 & 243.2 ms \\
 & mSNA ($\ell=0.5$) & 85.75 & 85.49 & 3.34e-01 & 3.41e-01 & 133.7 ms \\
 & mSNA  & 85.76 & 85.49 & 3.35e-01 & 3.41e-01 & 50.2 ms \\
\midrule
\multirow[c]{8}{*}{mushrooms} & SGD-Avg & 94.37 & 95.08 & 2.43e-01 & 2.19e-01 & 9.3 ms \\
 & SGD & 94.37 & 95.08 & 2.41e-01 & 2.17e-01 & \bfseries 4.6 ms \\
 & mSNA-Avg ($\ell=0.25$) & 99.18 & 98.95 & 3.50e-02 & 4.38e-02 & 47.1 ms \\
 & mSNA-Avg ($\ell=0.5$) & 99.18 & 98.95 & 3.50e-02 & 4.37e-02 & 48.6 ms \\
 & mSNA-Avg & 99.18 & 98.95 & 3.50e-02 & 4.38e-02 & 27.8 ms \\
 & mSNA ($\ell=0.25$) & 99.69 & 99.08 & \bfseries 2.98e-02 & 3.83e-02 & 47.4 ms \\
 & mSNA ($\ell=0.5$) & \bfseries 99.70 & \bfseries 99.20 & 2.98e-02 & \bfseries 3.81e-02 & 49.3 ms \\
 & mSNA & 99.66 & 99.08 & 2.98e-02 & 3.84e-02 & 30.3 ms \\
\midrule
\multirow[c]{8}{*}{phishing} & SGD-Avg & 89.45 & 89.69 & 2.56e-01 & 2.46e-01 & 20.2 ms \\
 & SGD & 89.26 & 89.51 & 2.59e-01 & 2.49e-01 & \bfseries 13.3 ms \\
 & mSNA-Avg ($\ell=0.25$) & 93.79 & 94.08 & 1.61e-01 & 1.63e-01 & 95.0 ms \\
 & mSNA-Avg ($\ell=0.5$) & 93.77 & 94.08 & \bfseries 1.61e-01 & \bfseries 1.62e-01 & 124.2 ms \\
 & mSNA-Avg  & 93.75 & 94.08 & 1.62e-01 & 1.63e-01 & 52.5 ms \\
 & mSNA ($\ell=0.25$) & 93.77 & 94.12 & 1.66e-01 & 1.66e-01 & 78.8 ms \\
 & mSNA ($\ell=0.5$) & \bfseries 93.80 & \bfseries 94.17 & 1.65e-01 & 1.65e-01 & 95.4 ms \\
 & mSNA  & 93.77 & 94.08 & 1.67e-01 & 1.66e-01 & 62.7 ms \\
\bottomrule
\end{tabular}
\normalsize
\end{table}

In Table \ref{tab:performance_results}, 
we observe that the mSNA algorithms, along with their averaged versions have analogous behavior or can  outperform gradient-based methods, with higher but comparable computation times.
These results highlight not only the superior performance of our algorithms but also their practical readiness for deployment.

\section*{Conclusion} \label{sec:conclusion}

This paper introduced an efficient online mini-batch stochastic Newton algorithm (mSNA) designed for smooth convex optimization problems in stochastic settings.
Our work advances the Universal Stochastic Newton Algorithm (USNA) 
proposed in \citet{godichon-baggioni_online_2025} by establishing, 
in the case of local strong convexity around the minimizer, 
the asymptotic efficiency of our algorithm without requiring iterate averaging, 
a notable theoretical improvement.
This was made possible by proving a general theorem on the convergence of Robbins-Monro algorithms for linear functions, which is of independent interest.

The proposed mSNA algorithm operates in a streaming fashion, 
processing data in mini-batches, making it particularly well-suited for large-scale datasets. 
This enables our algorithm to achieve a total computational complexity of $O(Nd)$ for a single pass over $N$ data points.
Numerical experiments on both synthetic and real-world datasets for linear and logistic regression, with or without ridge penalization, demonstrate that mSNA and its averaged variant consistently achieve competitive performance compared to SGD and averaged SGD.
These results highlight not only the strong theoretical guarantees of our method but also its practical readiness for real-world problems.

\paragraph{Limitations and Future Work:} 
The current work focuses on the convex setting, and several avenues for
refinement and extension within this scope exist. 

First, formulating the problem of matrix inverse estimation as an
optimization problem opens the door to the possibility of estimating the inverse under
structural constraints, such as Ridge or Lasso regularization. 
This could be valuable when the matrix is sparse or when its smallest eigenvalue
is too close to zero.

Second, exploring the robustness of the algorithm under weaker conditions, such
as relaxing the smoothness assumptions (e.g., \ref{Assumption expected
smoothness} \ref{Assumption:expected_smoothness_hessian}), would be a valuable
direction for future work. 
Establishing the asymptotic efficiency
of the averaged mSNA without relying on the Hessian Lipschitz continuity
assumption \ref{Assumption fct: hessian lipschitz} remains an open and
interesting challenge.

Finally, while the random mask strategy used for Hessian approximation is both
simple and effective, investigating alternative sketching techniques or adaptive
mechanisms for selecting Hessian information could lead to further improvements.

\bibliographystyle{apalike}
\bibliography{bibliography.bib}

\appendix
\section{Proofs}\label{sec:proofs}

\subsection{Proof of Proposition \ref{prop::cv rate of A_n}}

\begin{proof}[Proof of Proposition \ref{prop::cv rate of A_n}]
    The following lemma bounds almost surely and asymptotically the operator norm of $A_n$, 
    without relying on information on the behavior of $\theta_n$.

    \begin{lemma}\label{lemma::biggest eigenvalue A_n}
        Under Assumptions \ref{Assumption oracles unbiased} and \ref{Assumption expected smoothness}\ref{Assumption:expected_smoothness_hessian}, for any $\eta > 0$, we have:
        \[ \lambda_{\max}(A_n) = \mathcal O(n^{1-\gamma} (\ln n)^{1+\eta}) \quad \text{a.s.} \]
    \end{lemma}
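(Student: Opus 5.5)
Since $A_n$ is symmetric positive definite, $\lambda_{\max}(A_n)\le \mathrm{Tr}(A_n)$. It therefore suffices to control the trace, which turns the matrix recursion into a scalar one. Throughout, write $\chi_n \coloneq \mathbf{1}_{\gamma_n \norm{\tilde H_n}_{op}\le 1/2}$.

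\textbf{Step 1: a scalar recursion for the trace.} Take the trace in \eqref{eq:def A_n}:
\[
\mathrm{Tr}(A_n) = \mathrm{Tr}(A_{n-1}) - 2\gamma_n\chi_n\,\mathrm{Tr}(\tilde H_n A_{n-1}) + \gamma_n^2\chi_n\,\mathrm{Tr}(\tilde H_n A_{n-1}\tilde H_n^T) + 2\gamma_n\chi_n\,\ell .
\]
Since $A_{n-1}\succeq 0$, we have $\mathrm{Tr}(\tilde H_n A_{n-1}\tilde H_n^T)\le \norm{\tilde H_n}_{op}^2\mathrm{Tr}(A_{n-1})$, and on the event $\chi_n=1$ this is at most $\frac{1}{2\gamma_n}\norm{\tilde H_n}_{op}\mathrm{Tr}(A_{n-1})$. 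Also $|\mathrm{Tr}(\tilde H_n A_{n-1})|\le \norm{\tilde H_n}_{op}\mathrm{Tr}(A_{n-1})$.

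For an upper bound I do not want to use positivity of $H$, since no information on $\hat\theta_n$ is assumed. Doing so would be fragile, because $\tilde H_n = M_{\ell,n}H_n$ is not symmetric and $H_n$ need not be positive. So I write
\[
\mathrm{Tr}(A_n)\le \mathrm{Tr}(A_{n-1}) - 2\gamma_n\chi_n\,\mathrm{Tr}(\tilde H_nA_{n-1}) + \tfrac{\gamma_n}{2}\chi_n\norm{\tilde H_n}_{op}\mathrm{Tr}(A_{n-1}) + 2\ell\gamma_n .
\]
Writing this crudely gives $\mathrm{Tr}(A_n)\le (1+c\gamma_n\norm{H_n}_{op})\mathrm{Tr}(A_{n-1})+2\ell\gamma_n$. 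That produces an exponential growth factor $\prod_k(1+c\gamma_k\norm{H_k}_{op})$, which is useless since $\sum\gamma_k=\infty$.

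\textbf{Step 2: the refined argument.} The main obstacle is the order-$\gamma_n$ drift $-2\gamma_n\mathrm{Tr}(\tilde H_nA_{n-1})$. Its conditional expectation is $-\frac{2\ell}{d}\gamma_n\mathrm{Tr}(\nabla^2F(\hat\theta_{n-1})A_{n-1})\le 0$, because $\nabla^2F\succeq 0$ by convexity and $A_{n-1}\succeq 0$.

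To exploit this, I would split the drift as
\[
-2\gamma_n\chi_n\mathrm{Tr}(\tilde H_nA_{n-1}) = -2\gamma_n\mathrm{Tr}(\tilde H_nA_{n-1}) + 2\gamma_n(1-\chi_n)\mathrm{Tr}(\tilde H_nA_{n-1}).
\]
The first part equals a nonpositive predictable term plus a martingale increment $\gamma_n\xi_n$, with $\E[\xi_n^2\mid\F_{n-1}]\le C\mathcal L_h\,\mathrm{Tr}(A_{n-1})^2$. The truncation remainder is bounded by $2\gamma_n\norm{\tilde H_n}_{op}\mathbf 1_{\norm{\tilde H_n}_{op}>1/(2\gamma_n)}\mathrm{Tr}(A_{n-1})$. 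By Markov's inequality with (A2b), its conditional mean is at most $C\gamma_n^2\mathcal L_h\,\mathrm{Tr}(A_{n-1})$. The quadratic term is handled the same way: $\gamma_n^2\norm{\tilde H_n}_{op}^2$ has conditional mean at most $\gamma_n^2\mathcal L_h$.

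Collecting terms, $V_n\coloneq \mathrm{Tr}(A_n)$ satisfies
\[
V_n\le V_{n-1}(1+C\gamma_n^2+\epsilon_n)+2\ell\gamma_n+\gamma_nV_{n-1}\xi'_n ,
\]
where $\epsilon_n$ has conditional mean $O(\gamma_n^2)$ and $\xi'_n$ is a martingale difference with bounded conditional variance. Since $\sum\gamma_n^2<\infty$, dividing by $\prod_k(1+C\gamma_k^2)$ (which converges) gives an essentially supermartingale-type relation with additive input $2\ell\gamma_n$.

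\textbf{Step 3: the rate.} It remains to get the rate from this relation. The relative martingale noise is multiplicative, so I would pass to logarithms or argue as follows. Set $W_n\coloneq V_n/\big(1+\sum_{k\le n}\gamma_k\big)^{\,}$ and apply the Robbins–Siegmund theorem to $W_n^2$, or to $V_n^2/(\Gamma_n^2(\ln \Gamma_n)^{1+\eta})$ with $\Gamma_n=\sum_{k\le n}\gamma_k\asymp n^{1-\gamma}$. The additive input contributes $\sum_k \gamma_k\asymp n^{1-\gamma}$. The martingale part contributes fluctuations of order $\big(\sum_k\gamma_k^2V_{k-1}^2\big)^{1/2}$, which the normalization with $(\ln\Gamma_n)^{1+\eta}$ makes summable. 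The summability condition needed is of the form $\sum_n \gamma_n^2/(\ln \Gamma_n)^{1+\eta}\cdot(\ldots)<\infty$, and it is exactly where the $(\ln n)^{1+\eta}$ factor enters. Robbins–Siegmund then yields $V_n^2=O(n^{2(1-\gamma)}(\ln n)^{1+\eta})$ a.s. Up to renaming $\eta$, this gives $\lambda_{\max}(A_n)\le V_n=O(n^{1-\gamma}(\ln n)^{1+\eta})$ a.s.

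I expect this last step to be the delicate one: choosing the normalization so that the multiplicative noise stays summable while the additive $\gamma_n$ input is absorbed.
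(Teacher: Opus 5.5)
Your argument is correct, and it differs from the paper's mainly in the choice of Lyapunov functional. The paper expands $\norm{A_n}_F^2$. The first-order drift $\langle A_{n-1},\nabla^2F(\hat\theta_{n-1})A_{n-1}+A_{n-1}\nabla^2F(\hat\theta_{n-1})\rangle_F\ge 0$ is dropped by convexity, and the truncation and $\gamma_n^2$ terms are bounded by $\mathcal O(\gamma_n^2(1+\norm{A_{n-1}}_F^2))$, as in your Step 2. The cross term $2\gamma_n\langle A_{n-1},\E[M_{\ell,n}\mid\F_{n-1}]\rangle_F$ is split by Young's inequality into $\frac{\gamma_n}{\zeta_n}\norm{A_{n-1}}_F^2+\mathcal O(\gamma_n\zeta_n)$ with $\zeta_n=n^{1-\gamma}(\ln n)^{1+\eta}$. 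Lemma \ref{Robbins-Siegmund} with $a_n=\zeta_n^2$ then gives $\norm{A_n}_F=o(\zeta_n)$. That route never uses $A_n\succ 0$, since the drift sign holds for any matrix. Yours relies on the positivity preserved by \eqref{eq:factorization An}, both for $\mathrm{Tr}(A_n)\ge 0$ and for $|\mathrm{Tr}(\tilde H_nA_{n-1})|\le\norm{\tilde H_n}_{op}\mathrm{Tr}(A_{n-1})$. In exchange you get a scalar, linear recursion, and that makes your Step 3 much easier than you expect.

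The multiplicative martingale noise is harmless, because Robbins--Siegmund only uses the conditional mean of a nonnegative process. Taking $\E[\cdot\mid\F_{n-1}]$ in your Step 2 relation gives
\[
\E\left[\mathrm{Tr}(A_n)\mid\F_{n-1}\right]\le\left(1+C\gamma_n^2\right)\mathrm{Tr}(A_{n-1})+2\ell\gamma_n .
\]
Apply Lemma \ref{Robbins-Siegmund} with $V_n=\mathrm{Tr}(A_n)$, $E_n=C\gamma_n^2$, $B_n=2\ell\gamma_n$, $D_n=0$ and $a_n=n^{1-\gamma}(\ln n)^{1+\eta}$. It yields $\mathrm{Tr}(A_n)=o(a_n)$ a.s., since $\sum_n\gamma_n/a_n\asymp\sum_n n^{-1}(\ln n)^{-1-\eta}<\infty$. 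No squaring, Young step or fluctuation analysis is needed, so this is shorter than the paper's proof.

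Your Step 3 as written is also imprecise in two places. First, normalizing $V_n^2$ by your $\Gamma_n^2$ alone does not close with Robbins--Siegmund. The cross term $4\ell\gamma_nV_{n-1}$ leaves a source of order $\gamma_n/\Gamma_n$, which is not summable. Second, the $(\ln\Gamma_n)^{1+\eta}$ factor is needed for that cross term, not for the martingale part. The martingale part only contributes $\mathcal O(\gamma_n^2)V_{n-1}^2$, which is absorbed by the convergent product $\prod_k(1+C\gamma_k^2)$.
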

    The proof of this lemma is technical and is deferred to the next section \ref{sec::control largest eigenvalue An}.

    We have
    \begin{align*}
        A_{n} = A_{n-1} - \gamma_{n} \left( \nabla J_{sym}(A_{n-1}) + \epsilon_n + r_n \right) ,
    \end{align*}
    with a noise
    \begin{align*}
        \epsilon_n & \coloneq \left( 
            \tilde H_n A_{n-1} + A_{n-1} \tilde H_{n}^T - 2 M_{n,\ell} - \gamma_n \tilde H_n A_{n-1} \tilde H_{n}^T
        \right) \mathbf 1_{\gamma_n \norm{ \tilde H_{n} }_{op} \leq \frac{1}{2} } \\
            & \qquad - \E \left[ \left( \tilde H_n A_{n-1} + A_{n-1} \tilde H_{n}^T - 2 M_{n,\ell} 
                    - \gamma_n \tilde H_n A_{n-1} \tilde H_{n}^T \right)
            \mathbf 1_{ \gamma_n \norm{ \tilde H_{n} }_{op} \leq \frac{1}{2} } \mid \F_{n-1} \right] ,
    \end{align*}
    and residual terms
    \begin{align*}
        r_n       & \coloneq r^{(1)}_n + r^{(2)}_n + r^{(3)}_n                                          \\
        r^{(1)}_n & \coloneq
        A_{n-1} \left( \nabla^2 F(\theta_{n-1}) - H \right)
        + \left( \nabla^2 F(\theta_{n-1}) - H \right) A_{n-1}                                           \\
        r^{(2)}_n & \coloneq
        \E \left[ \gamma_{n} \tilde H_{n} A_{n-1} \tilde H_{n}^T
        \mathbf 1_{\gamma_n \norm{ \tilde H_n }_{op} \leq \frac{1}{2} } \mid \F_{n-1} \right] \\
        r^{(3)}_n & \coloneq
        - \E \left[ \left( \tilde H_n A_{n-1} + A_{n-1} \tilde H_n^T  + 2 M_{n,\ell} \right)
            \mathbf 1_{\gamma_n \norm{ \tilde H_{n} }_{op} > \frac{1}{2} } \mid \F_{n-1} \right] .
    \end{align*}
    The term $r^{(1)}$ accounts for the error of the estimator $\theta_n$, leading to a bias in the estimation of $H$.
    The term $r^{(2)}$ accounts for the term added in order to ensure the positivity of $A_n$,
    and the term $r^{(3)}$ accounts for the truncation.
    By construction, the noise $\epsilon_n$ is such that $\E \left[ \epsilon_n \mid \F_{n-1} \right] = 0$.
    Moreover, using inequality $(a + b)^p \leq 2^{p-1} (a^p + b^p)$ for $p \geq 2$, we have
    \begin{align*}
        \E \left[ \norm{\epsilon_n}_F^2 \mid \F_{n-1} \right]
            & \leq \E \left[
            \norm{\tilde H_n A_{n-1} + A_{n-1} \tilde H_{n}^T + 2 M_{n,\ell} 
            + \gamma_n \tilde H_n A_{n-1} \tilde H_{n}^T}_F^2
            \mathbf 1_{\gamma_n \norm{\tilde H_{n}}_{op} \leq \frac{1}{2} }
        \mid \F_{n-1} \right]                                    \\
            & \leq \E \left[ \left(
            2 \norm{ A_{n-1} }_{F} \norm{ \tilde H_n }_F
            + 2 \norm{ M_{n,\ell} }_F
            + \frac{1}{2} \norm{ A_{n-1} }_{F} \norm{ \tilde H_n }_F
        \right)^2 \mid \F_{n-1} \right]                          \\
            & \leq \E \left[
            2 \times \left( \frac{5}{2} \norm{ A_{n-1} }_{F} \norm{ \tilde H_n }_F \right)^2
            + 2 \times \left( 2 \norm{ M_{n,\ell} }_F \right)^2
        \mid \F_{n-1} \right]                                    \\
            & = \mathcal O \left( 1 + \norm{ A_{n-1} }_F^2 \right) .
    \end{align*}
    Since $\theta_n \xrightarrow[n \to \infty]{\text{a.s.}} \theta^*$ and
    using Assumption \ref{Assumption oracle lyapunov}\ref{Assumption:Lyapunov_H}, 
    for $n$ large enough $\norm{\epsilon_n}_F$ has a moment of order $q'$ and we have 
    with the same reasoning as order $2$:
    \[ 
        \E \left[ \norm{\epsilon_n}_F^{q'} \mid \F_{n-1} \right] 
        = \mathcal O \left( 1 + \norm{ A_{n-1} }_F^{q'} \right) .
    \]
    For the residual term $r_n$, taking a non-increasing modulus of continuity $\mu$ of $\nabla^2 F$ in $\theta^*$, we have
    \[ \norm{ \nabla^2 F(\theta_{n-1}) - H} \leq \mu ( \norm{ \theta_{n-1} - \theta^* } ) \leq \mu(\delta_n) . \]
    We get
    \begin{align*}
        \norm{ r^{(1)}_n }_F
            & \leq  2 \norm{ A_{n-1} }_{F} \norm{ \nabla^2 F(\theta_{n-1}) - H }_F
        = \mathcal{O} \left( \norm{ A_{n-1} }_F \mu(\delta_n) \right),                                                                     \\
        \norm{ r^{(2)}_n }_F
            & \leq \gamma_n \norm{ A_{n-1} }_{F} \E \left[ \norm{ \tilde H_n }_F^2 \mid \F_{n-1} \right]
        = \mathcal{O} \big( \gamma_n \norm{ A_{n-1} }_{F} \big) \text{ a.s.},                                                              \\
        \norm{ r^{(3)}_n }_F
            & \leq \E \left[ \left( 2 \norm{ A_{n-1} }_{F} \norm{ \tilde H_n }_F + 2 \norm{ M_{n,\ell} }_F \right)
        \times 2 \gamma_n \norm{ \tilde H_n }_F \mid \F_{n-1} \right]                                                    \\
            & = \mathcal O \left( \gamma_n \left( 1 + \norm{ A_{n-1} }_F \right) \right) \text{ a.s.}
    \end{align*}
    Therefore,
    \[
        \norm{ r_n }_F = \mathcal{O} \left( \left( 1 + \norm{ A_{n-1} }_F \right)  \left( \mu(\delta_n) + \gamma_n \right) \right) \text{ a.s.}
    \]
    By applying the Proposition \ref{thm::new_robbins_monro}
    with $a_n \coloneq \mu(\delta_n) + \gamma_n$ which is non-increasing,  we obtain for any $\eta > 0$:
    \begin{align*}
        \norm{ A_n - H^{-1} }_F^2 & = \mathcal O \left(
        \gamma_n (\ln{n})^{1+\eta} + \mu \left( \delta_{\lceil \frac{n}{2} \rceil } \right)^2 + \gamma_{\lceil \frac{n}{2} \rceil }^2
        \right) \quad \text{a.s.}                                                                                                                                            \\
                                        & = \mathcal O \left( \gamma_n (\ln{n})^{1+\eta} + \mu \left( \delta_{\lceil \frac{n}{2} \rceil } \right)^2 \right) \quad \text{a.s.}
    \end{align*}
    since $\gamma_{\lceil \frac{n}{2} \rceil }^2 = o(\gamma_n)$, which concludes this proof.
\end{proof}

\subsection{Proof of Theorem \ref{thm::USNA}}

\begin{proof}[Proof of Theorem \ref{thm::USNA}]
    We first prove the strong consistency of $(\theta_n)$.
    Recall that by induction on the factorization \eqref{eq:factorization An}, $A_n$ is positive definite for any $n \geq 0$.
    With the help of Lemma \ref{lemma::biggest eigenvalue A_n}, we have for any $\eta > 0$:
    \[ 
        \lambda_{\min}(A_n + \nu_n I_d) \geq \nu_n .
        \quad \text{and} \quad
        \lambda_{\max}(A_n + \nu_n I_d) = \mathcal O(n^{1-\gamma} (\ln n)^{1+\eta}) \quad \text{a.s.}
    \]
    By definition of $(\nu_n)$, and if $\alpha + \gamma > 3/2$, we have 
    \[ \sum_{n\geq1} \alpha_n \nu_n = +\infty \quad \text{and} \quad 
    \sum_{n\geq 1} \alpha_n n^{1-\gamma} (\ln n)^{1+\eta} < +\infty .\]
    Therefore, we can apply Theorem 1 from \cite{godichon-baggioni_adaptive_2025} 
    to obtain the strong consistency of $(\theta_n)$ towards $\theta^*$.
    Then, by applying Theorem \ref{thm::new_robbins_monro}, we obtain an almost sure rate of convergence of $(A_n)$ toward $H^{-1}$, 
    and in particular the strong consistency of $(A_n)$.
    Hence, if $1 - \zeta > 0$ we can apply Theorem 2 in \cite{leluc_asymptotic_2023} to obtain the asymptotic normality of $\theta_n$:
    \[
        \frac{1}{\sqrt{\alpha_n}} \left( \theta_n - \theta^* \right)
        \xrightarrow[n \to +\infty]{\mathcal L}
        \mathcal N \left( 0, \frac{1}{2(1-\zeta)} H^{-1} \Gamma H^{-1} \right) .
    \]
    To obtain the rate of almost sure convergence with Assumption \ref{Assumption fct: hessian lipschitz},
    from \cite{boyer_asymptotic_2023}, if $\alpha = 1$ we apply Theorem 3.2, and if $\alpha \in (2/b,1)$ we apply Theorem 4.2.
\end{proof}

\begin{proof}[Proof of Theorem \ref{thm::UWASNA}]
    The norm of the averaged matrices $\bar A_n$ still verify the upper bound of Lemma \ref{lemma::biggest eigenvalue A_n},
    since $\bar A_n$ is a convex combination of $A_j$ for $j \leq n$ and the upper bound is increasing. 
    Hence, we can follow the same reasoning as the proof of Theorem \ref{thm::USNA} to obtain the results on $\theta_n$ and $A_n$. 

    By application of the Toeplitz lemma, the strong consistency of $\theta_n$ (resp. $A_n$) gives the strong consistency of $\bar \theta_n$ $(resp. \bar A_n)$.
    To obtain the convergence rate and asymptotic normality of $\bar \theta_n$,
    we can apply Theorem 3 in \cite{godichon-baggioni_adaptive_2025}.
    Their Assumption 5 is implied by Assumptions \ref{Assumption fct: hessian lipschitz} and \ref{Assumption expected smoothness}\ref{Assumption:expected_smoothness_hessian}.
    Then, Theorem \ref{thm::new_robbins_monro} with Assumption \ref{Assumption fct: hessian lipschitz} gives us the convergence rate of $\bar A_n$ .
\end{proof}

\subsection{Control on the largest eigenvalue of \texorpdfstring{$A_n$}{An}} \label{sec::control largest eigenvalue An}

This proof of Lemma \ref{lemma::biggest eigenvalue A_n} follows closely the proof of Proposition 6.1 in \cite{godichon-baggioni_online_2025}.

\begin{proof}[Proof of Lemma \ref{lemma::biggest eigenvalue A_n}]
    Let $n \geq 1$.  We have:
    \begin{align}
        &\mathbb E\left[ \lVert A_{n} \rVert_F^2 \mid \mathcal F_{n-1} \right] 
        = \lVert A_{n-1} \rVert_F^2 + 2 \langle A_{n-1}, \mathbb E\left[ A_{n} - A_{n-1} \mid \mathcal F_{n-1} \right] \rangle_F 
        + \mathbb E\left[ \lVert A_{n} - A_{n-1} \rVert_F^2 \mid \mathcal F_{n-1} \right] . \label{decomp largest vp} 
    \end{align}
    For the last term of the right-hand side of \eqref{decomp largest vp}, we have using the truncation and then the inequality $(a + b)^2 \leq 2a^2 + 2b^2$:
    \begin{align*}
        &\mathbb E\left[ \left\lVert A_n - A_{n-1} \right\rVert^2_F \mid \mathcal F_{n-1} \right] \\
        &=  \mathbb E\left[ \left\lVert \gamma_{n} \left( \tilde H_{n} A_{n-1} + A_{n-1} \tilde H_{n}^T - 2 M_{n,\ell} \right) 
        - \gamma_{n}^2 \tilde H_{n} A_{n-1} \tilde H_{n}^T \right\rVert^2_F \, 
        \mathbf 1_{\gamma_n \lVert \tilde H_{n} \rVert_{op} \leq \frac{1}{2}} \mid \mathcal F_{n-1} \right] \\
        &\leq \gamma_n^2 \mathbb E\left[ \left( 2 \lVert \tilde H_{n} \rVert_{F} \lVert A_{n-1} \rVert_F + 2 \lVert M_{n,\ell} \rVert_F
        + \gamma_{n} \lVert \tilde H_{n} \rVert^2_{F} \lVert A_{n-1} \rVert_F \right)^2 
        \mathbf 1_{\lVert \gamma_n \tilde H_{n} \rVert_{op} \leq \frac{1}{2}} \mid \mathcal F_{n-1} \right] \\
        &\leq \gamma_n^2 \mathbb E\left[ \left( 2 \lVert \tilde H_{n} \rVert_{F} \lVert A_{n-1} \rVert_F 
        + 2 \lVert M_{n,\ell} \rVert_F + \frac{1}{2} \lVert \tilde H_{n} \rVert_{F} \lVert A_{n-1} \rVert_F \right)^2 \mid \mathcal F_{n-1} \right] \\
        &\leq \gamma_{n}^2 \left( 2 \times \frac{25}{4} \, \mathbb E\left[ \lVert \tilde H_{n} \rVert_{F}^2 \mid \mathcal F_{n-1} \right]
        \lVert A_{n-1} \rVert^2_F + 2 \times 4 \mathbb{E} \left[ \norm{ M_{n,\ell} }_F^2 \mid \mathcal{F}_{n-1} \right] \right) \\
        &= \mathcal{O} \left( \gamma_n^2 \left( \norm{ A_{n-1} }_F^2 + 1 \right) \right) \text{ a.s.}
    \end{align*}
    Regarding the inner product term of \eqref{decomp largest vp}, in order to form the expectation of $\tilde H_{n-1}$, we can write
    \begin{align*}
        &\mathbb E\left[ A_{n} - A_{n-1} \mid \mathcal F_{n-1} \right] \\
        &= \mathbb E\left[ \left( -\gamma_{n} \left( \tilde H_{n} A_{n-1} + A_{n-1} \tilde H_{n}^T - 2 M_{n,\ell} \right) 
        + \gamma_{n}^2 \tilde H_{n} A_{n-1} \tilde H_{n}^T \right)
        \mathbf 1_{\gamma_n \lVert \tilde H_{n} \rVert_{op} \leq \frac{1}{2} }  \mid \mathcal F_{n-1} \right] \\
        &= -\gamma_{n} \mathbb E\left[ \tilde H_{n} A_{n-1} + A_{n-1} \tilde H_{n}^T - 2 M_{n,\ell} \mid \mathcal F_{n-1} \right]
        + \gamma_{n}^2 \mathbb E\left[ \tilde H_{n} A_{n-1} \tilde H_{n}^T 
        \mathbf 1_{\gamma_n \lVert \tilde H_{n} \rVert_{op} \leq \frac{1}{2} } \mid \mathcal F_{n-1} \right] \\
        &\quad\, + \gamma_{n} \mathbb E\left[ \left( \tilde H_{n} A_{n-1} + A_{n-1} \tilde H_{n}^T - 2 M_{n,\ell} \right)  
        \mathbf 1_{\gamma_n \lVert \tilde H_{n} \rVert_{op} > \frac{1}{2} } \mid \mathcal F_{n-1} \right] . 
        \refstepcounter{equation}\tag{\theequation}\label{decomp 3 terms}
    \end{align*}
    We now bound the inner product of $A_{n-1}$ with each of the three terms of \eqref{decomp 3 terms}. \\
    By Assumption \ref{Assumption oracles unbiased}, the conditional expectation of $\tilde H_{n}$ is the hessian $\nabla^2 F(\theta_{n-1})$,
    which is positive since $F$ is convex. We get:
    \begin{align*}
        &\langle A_{n-1}, \nabla^2 F(\theta_{n-1}) A_{n-1} \rangle_F = \mathrm{Tr}\left( A_{n-1}^T \nabla^2 F(\theta_{n-1}) A_{n-1} \right) \geq 0 \\
        &\langle A_{n-1}, A_{n-1} \nabla^2 F(\theta_{n-1}) \rangle_F = \mathrm{Tr}\left( A_{n-1}^T A_{n-1} \nabla^2 F(\theta_{n-1}) \right)
        = \mathrm{Tr} \left( A_{n-1} \nabla^2 F(\theta_{n-1}) A_{n-1}^T \right) 
        \geq 0 .
    \end{align*}
    Thus, for any $\zeta_{n} > 0$ which we will choose later in order to balance each component, 
    we get the following bound for the inner product of $A_{n-1}$ with the first term of \eqref{decomp 3 terms}:
    \begin{align*}
        \langle A_{n-1}, -\gamma_{n} \mathbb E\left[ \tilde H_{n} A_{n-1} + A_{n-1} \tilde H_{n}^T - 2 M_{n,\ell} \mid \mathcal F_{n-1} \right] \rangle_F 
        &\leq 2\gamma_{n} \langle A_{n-1}, \mathbb{E} \left[ M_{n,\ell} \mid \mathcal{F}_{n-1} \right] \rangle_F \\
        &\leq \frac{\gamma_{n}}{\zeta_{n}} \lVert A_{n-1} \rVert^2_F + \frac{\ell^2}{d^2} \gamma_{n} \zeta_{n} .
    \end{align*}
    For the inner product of $A_{n-1}$ with the second term of \eqref{decomp 3 terms}, we get
    \begin{align*}
        \langle A_{n-1}, \gamma_{n}^2 \mathbb E\left[ \tilde H_{n} A_{n-1} \tilde H_{n}^T \mathbf 1_{\gamma_n \lVert \tilde H_{n} \rVert_{op} \leq \frac{1}{2} } 
        \mid \mathcal F_{n-1} \right] \rangle_F 
        &\leq \gamma_{n}^2 \lVert A_{n-1} \rVert^2 \mathbb E\left[ \lVert \tilde H_{n} \rVert^2_{F}  
        \mid \mathcal F_{n-1} \right] .
    \end{align*}
    For the inner product of $A_{n-1}$ with the third term of \eqref{decomp 3 terms}, we get:
    \begin{align*}
        &\langle A_{n-1}, \gamma_{n} \mathbb E\left[ \left( \tilde H_{n} A_{n-1} + A_{n-1} \tilde H_{n}^T - 2 M_{n,\ell} \right) 
        \mathbf 1_{\gamma_n \lVert \tilde H_{n} \rVert_{op} > \frac{1}{2} } \mid \mathcal F_{n-1} \right] \rangle_F \\
        &\leq 2\gamma_{n} \lVert A_{n-1} \rVert^2_F \mathbb E\left[ \lVert \tilde H_{n} \rVert_{F}  
        \mathbf 1_{ \gamma_n \lVert \tilde H_{n} \rVert_{op} > \frac{1}{2} } \mid \mathcal F_{n-1} \right] 
        + 2\gamma_{n} | \langle A_{n-1}, \mathbb{E} \left[ M_{n,\ell} \mid \mathcal{F}_{n-1} \right] \rangle_F | \\
        &\leq 2\gamma_{n} \lVert A_{n-1} \rVert^2_F \mathbb E\left[ 2 \gamma_n \lVert \tilde H_{n} \rVert_{F}^2  
        \mid \mathcal F_{n-1} \right]
        + \frac{\gamma_n }{\zeta_n} \lVert A_{n-1} \rVert_F^2 + \gamma_{n} \zeta_{n} d \\
        &= \mathcal{O} \left( \left( \gamma_n^2 + \frac{\gamma_n }{\zeta_n } \right) \lVert A_{n-1} \rVert^2_F + \gamma_n \zeta_n \right) \text{ a.s.}
    \end{align*}
    Finally, we obtain
    \begin{align*}
        \mathbb E\left[ \lVert A_{n} \rVert_F^2 \mid \mathcal F_{n-1} \right] \leq 
        \left( 1 + E_n \right) \lVert A_{n-1} \rVert^2_F + B_n,
    \end{align*}
    with $E_n = \mathcal{O}(\gamma_n^2 + \gamma_n / \zeta_n)$ and $B_n = \mathcal{O}(\gamma_n^2 + \gamma_n \zeta_n)$.
    We can now set $\zeta_n := n^{1-\gamma}(\ln n)^{1+\eta}$ with $\eta>0$, so that $(\frac{\gamma_n}{\zeta_n})$ is summable. 
    The \mbox{Lemma \ref{Robbins-Siegmund}} applied with $a_n := \zeta_n^2$ and $D_n := 0$ gives us: 
    \[ \lVert A_{n-1} \rVert^2_F = o(\zeta_{n-1}^2) \quad \text{a.s.} \]
\end{proof}
\section{Auxiliary results}

\begin{theorem}\label{thm::new_robbins_monro}
    Let $H \in \mathcal{M}_d(\mathbb{R})$ be a positive definite matrix and $z^* \in \mathbb{R}^d$.
    Consider the linear Robbins-Monro algorithm
    \[ z_n = z_{n-1} - \gamma_n \left( h(z_{n-1}) + \epsilon_n + r_n \right), \]
    where $h(z) = H(z - z^*)$, $(\epsilon_n)$ and $(r_n)$ are random sequences adapted to a filtration $(\mathcal{F}_n)$,
    $z_0$ is an $\mathcal{F}_0$-measurable $\mathbb{R}^d$-valued random variable,
    and $\gamma_n = c / (n^{\gamma} + c')$ with $\gamma \in \left(\frac{1}{2}, 1\right)$, $c > 0$, and $c' \geq 0$.
    Assume there exists an exponent $p \geq 0$ such that $\|z_n\| = \mathcal{O}(n^p)$ a.s.
    Suppose further that for all $n \geq 1$,
    \begin{align*}
        \E \left[ \epsilon_n \mid \F_{n-1} \right]       & = 0 \text{ a.s.}, \\
        \E \left[ \|\epsilon_n\|^b \mid \F_{n-1} \right] & = \mathcal{O} \left( \left(1 + \|z_{n-1}\| \right)^b \right) \text{ a.s.},\\
        \|r_n\| & = \mathcal{O} \left( a_n \left( 1 + \|z_{n-1}\| \right) \right) \text{ a.s.},
    \end{align*}
    for some $b > \frac{2}{\gamma}$, and a random sequence $(a_n)$ that almost surely is non-increasing and converging to $0$.
    Then, for any $\eta > 0$,
    \[ \|z_n - z^*\|^2 = \mathcal{O} \left( \gamma_n (\ln n)^{1+\eta} + a_{\lceil n/2 \rceil}^2 \right) \text{ a.s.} \]
\end{theorem}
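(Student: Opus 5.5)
The plan is to linearize around $z^*$ and work with the explicit solution of the recursion. Set $\Delta_n \coloneq z_n - z^*$, so that
\[ \Delta_n = (I_d - \gamma_n H)\Delta_{n-1} - \gamma_n \epsilon_n - \gamma_n r_n . \]
Write $\beta_{n,k} \coloneq \prod_{j=k+1}^n (I_d - \gamma_j H)$. Since $H$ is positive definite, for $n \geq k$ large we have $\norm{\beta_{n,k}}_{op} \leq \exp\bigl(-\lambda_{\min}(H)\sum_{j=k+1}^n \gamma_j\bigr)$. This gives
\[ \Delta_n = \beta_{n,0}\Delta_0 - \sum_{k=1}^n \beta_{n,k}\gamma_k \epsilon_k - \sum_{k=1}^n \beta_{n,k}\gamma_k r_k . \]
The first term decays faster than any power of $n$ because $\sum_j \gamma_j \sim n^{1-\gamma}$. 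The other two terms are bounded separately.

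\textbf{Step 1: a crude a priori bound.} The hypotheses only give $\norm{z_n} = \mathcal O(n^p)$, while the noise and residual bounds depend on $1+\norm{z_{n-1}}$.
\begin{itemize}
\item I would first prove $z_n \to z^*$ a.s. Use a Robbins--Siegmund argument (Lemma \ref{Robbins-Siegmund}) on $\norm{\Delta_n}^2$ along the lines of the proof of Lemma \ref{lemma::biggest eigenvalue A_n}.
\item The drift gives $-2\gamma_n\lambda_{\min}(H)\norm{\Delta_{n-1}}^2$. The residual contributes $\gamma_n a_n(1+\norm{\Delta_{n-1}})^2$, which is absorbed once $a_n$ is small, since $a_n \to 0$.
\item The noise contributes $\gamma_n^2(1+\norm{\Delta_{n-1}})^2$, which is summable in the Robbins--Siegmund sense.
\end{itemize}
This yields $\norm{\Delta_n} \to 0$ a.s., so $\norm{z_{n-1}}$ is a.s. bounded. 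The noise then satisfies a uniform conditional $b$-th moment bound, and $\norm{r_n} = \mathcal O(a_n)$. The polynomial bound $\mathcal O(n^p)$ is what lets me apply the martingale estimates on the events where a stopping time is infinite. Concretely, I would truncate via $\epsilon_n \mathbf 1_{\norm{z_{n-1}}\leq R}$ and let $R$ grow.

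\textbf{Step 2: the martingale term.} Set $M_n \coloneq \sum_{k\leq n}\beta_{n,k}\gamma_k\epsilon_k$. I would show $\norm{M_n}^2 = \mathcal O(\gamma_n(\ln n)^{1+\eta})$ a.s. using the standard technique for linear recursions (as in Pelletier, or Proposition-type results in Godichon-Baggioni's work):
\begin{itemize}
\item Write $M_n = (I - \gamma_n H)M_{n-1} + \gamma_n\epsilon_n$.
\item Apply Robbins--Siegmund to $V_n \coloneq \norm{M_n}^2/(\gamma_n(\ln n)^{1+\eta})$. The conditional increments produce the summable term $\gamma_n/(\ln n)^{1+\eta}$, since $\sum 1/(n^\gamma \ln^{1+\eta}n)$ — wait, this is not summable.
\item Instead, I would follow the classical route: a martingale law of the iterated logarithm / exponential inequality on blocks. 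Equivalently, use the bounded conditional moment of order $b>2/\gamma$ with a Borel--Cantelli argument on $\mathbb P(\norm{M_n}^2 > \gamma_n(\ln n)^{1+\eta})$.
\end{itemize}
To control higher moments I would use Burkholder's inequality:
\[ \E\norm{M_n}^b \lesssim \Bigl(\sum_k \norm{\beta_{n,k}}^2\gamma_k^2\Bigr)^{b/2} \lesssim \gamma_n^{b/2}. \]
The condition $b > 2/\gamma$ is exactly what makes the union bound over a geometric or polynomial grid of $n$ converge. Between grid points, I would control the oscillation using the recursion. This step is the main obstacle. Getting precisely the $(\ln n)^{1+\eta}$ factor with only $b > 2/\gamma$ moments requires the truncation $\epsilon_k \mathbf 1_{\norm{\epsilon_k}\leq k^{\gamma/2}}$, with the remainder handled by Borel--Cantelli using $\sum_k k^{-\gamma b/2} < \infty$. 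The truncated, bounded martingale is then handled by a Freedman/Bernstein exponential inequality, which gives the logarithmic factor.

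\textbf{Step 3: the residual term.} Split $\sum_{k=1}^n \beta_{n,k}\gamma_k r_k$ at $k=\lceil n/2\rceil$.
\begin{itemize}
\item For $k \leq n/2$: $\norm{\beta_{n,k}} \leq \exp(-c\, n^{1-\gamma})$ and $\sum_k\gamma_k a_k(1+\norm{z_{k-1}})$ grows at most polynomially. This part is therefore $o(\gamma_n)$.
\item For $k > n/2$: $a_k \leq a_{\lceil n/2\rceil}$ by monotonicity. Since $\sum_{k>n/2}\norm{\beta_{n,k}}\gamma_k \leq \sum_k \norm{\beta_{n,k}}\gamma_k = \mathcal O(1)$ (a Riemann-sum comparison with $\int e^{-\lambda s}ds$), this part is $\mathcal O(a_{\lceil n/2\rceil})$, using the boundedness from Step 1.
\end{itemize}
Squaring and combining the three bounds via $(x+y+z)^2\leq 3(x^2+y^2+z^2)$ gives
\[ \norm{z_n - z^*}^2 = \mathcal O\bigl(\gamma_n(\ln n)^{1+\eta} + a_{\lceil n/2\rceil}^2\bigr) \quad \text{a.s.} \]
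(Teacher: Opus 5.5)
\textbf{Gap in Step 1.} Your decomposition of $z_n - z^*$ into the initial-condition term, the weighted martingale $M_n$ and the weighted residual sum is the paper's. Steps 2 and 3 are sound once $\sup_n \norm{z_n} < \infty$ is known: Step 3 is the splitting at $\lceil n/2 \rceil$ used in Proposition \ref{prop::rate_deterministic_rec}, and Step 2 reconstructs the martingale estimate the paper imports from C\'enac et al. The gap is Step 1, on which both rest, because a Robbins--Siegmund argument on $\norm{\Delta_n}^2$ does not go through under these hypotheses.

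\begin{itemize}
\item \emph{Non-summable forcing term.} After absorbing $\gamma_n a_n \norm{\Delta_{n-1}}^2$ into the drift, you are left with a forcing term of order $\gamma_n a_n$ (or $\gamma_n a_n^2$ after Young's inequality). Only $a_n \to 0$ is assumed, with no rate, so nothing makes this summable, whereas Robbins--Siegmund needs $\sum_n B_n < \infty$. The normalized form, Lemma \ref{Robbins-Siegmund}, would only give $\norm{\Delta_n}^2 = o(c_n)$ for some $c_n \to \infty$, not boundedness.
\item \emph{No conditional control of the residual.} Robbins--Siegmund requires a bound on $\E[\langle \Delta_{n-1}, r_n \rangle \mid \F_{n-1}]$. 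But $\norm{r_n} = \mathcal O(a_n(1+\norm{z_{n-1}}))$ is a pathwise bound with a random constant, $r_n$ is only $\F_n$-measurable, and $(a_n)$ need not be adapted. In the proof of Proposition \ref{prop::cv rate of A_n}, $a_n = \mu(\delta_n) + \gamma_n$ with $\delta_n = \sup_{k \geq n} \norm{\hat\theta_{k-1} - \theta^*}$, which looks into the future. So ``absorbed once $a_n$ is small'' refers to a random time that is not a stopping time.
\item \emph{Circular localization.} Your truncation by $\mathbf 1_{\norm{z_{n-1}} \leq R}$ presupposes the boundedness it is meant to deliver. Also, once Step 1 is granted, the hypothesis $\norm{z_n} = \mathcal O(n^p)$ becomes redundant in your plan, which signals that its real role has been missed.
\end{itemize}

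\textbf{The missing idea: bootstrap on the growth exponent.} The paper needs no a priori convergence; it lowers the polynomial growth exponent step by step and treats the residual pathwise. Write $Z_n$, $E_n$ and $R_n$ for the three terms of your decomposition, so $E_n$ is your $M_n$ up to sign.
\begin{itemize}
\item \emph{Martingale term.} Normalize the noise by $1 + \norm{z_{n-1}} = \mathcal O(n^p)$. The paper applies Theorem 6.1 of C\'enac et al.\ with $T_n = (1+\norm{z_{n-1}}) I_d$, extended to growing $T_n$. Your truncation and exponential-inequality sketch could give the same estimate, with the truncation level adjusted by a logarithmic factor. This yields $\norm{E_n}^2 = \mathcal O(n^{2p} \gamma_n (\ln n)^{1+\eta})$ from the polynomial bound alone.
\item \emph{Residual term.} $R_n$ satisfies the deterministic inequality $\norm{R_n} \leq (1 - \lambda_{\min}(H)\gamma_n)\norm{R_{n-1}} + \gamma_n \norm{r_n}$, with $\norm{r_n} = \mathcal O(a_n(1 + \norm{Z_{n-1}} + \norm{E_{n-1}} + \norm{R_{n-1}}))$. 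Lemma \ref{lemma::bound Rn} solves this for each fixed $\omega$.
\item \emph{Iteration.} Together these give $\norm{z_n - z^*} = \mathcal O(a_{\lceil n/2 \rceil} + n^{p - \gamma/2}(\ln n)^{(1+\eta)/2})$, hence $\norm{z_n} = \mathcal O(n^{p-\gamma/4} + 1)$. Finitely many iterations bring $p$ down to $0$, and the final pass with $p = 0$ is exactly your Steps 2 and 3.
\end{itemize}
Replacing your Step 1 by this bootstrap repairs the argument.
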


\begin{proof}
    Define $\beta_{k,n} \coloneqq \prod_{j=k}^n (I_d - \gamma_j H)$ for all $k \leq n$, and $\beta_{k,n} \coloneqq I_d$ for $k>n$.
    By induction, we express
    \begin{equation}
        \label{eq::z_n induction}
        z_n - z^* = \underbrace{\beta_{1,n} (z_0 - z^*)}_{Z_n}
        + \underbrace{\sum_{k=1}^n \beta_{k+1,n} \gamma_k \epsilon_k}_{E_n}
        + \underbrace{\sum_{k=1}^n \beta_{k+1,n} \gamma_k r_k}_{R_n}.
    \end{equation}
    Choose $n_0 \in \mathbb{N}$ such that for all $n \geq n_0$,
    $\lambda_{\max}(H) \gamma_{n} \leq 1$. Since $H$ is positive, for $n \geq n_0$,
    \[
        \|I_d - \gamma_n H\|_{op}
        \leq \max \{ 1-\lambda_{\min}(H)\gamma_n , 1 - \lambda_{\max}(H) \gamma_n \}
        = 1 - \lambda_{\min}(H) \gamma_n.
    \]

    \paragraph{Bounding $Z_n$.} The term $Z_n$ tends exponentially fast to $0$. Indeed, for $n \geq n_0$,
    \[
        \|\beta_{1,n}\|_{op} \leq \|\beta_{1,n_0-1}\|_{op} \prod_{j=n_0}^n \left(1 - \lambda_{\min}(H) \gamma_j \right)
        \leq \|\beta_{1,n_0-1}\|_{op} \exp \left( -\lambda_{\min}(H) \sum_{j=n_0}^n \gamma_j \right),
    \]
    and $\sum_{j=n_0}^n \gamma_j \sim c n^{1-\gamma} / (1 - \gamma)$.

    \paragraph{Bounding $E_n$.}
    Let $p \geq 0$ be such that $\|z_{n-1}\| = \mathcal{O}(n^{p})$ a.s.
    Since $b > \frac{2}{\gamma}$, apply Theorem 6.1 and Remark 6.1 from \cite{cenac_efficient_2020}, setting
    $T_n \coloneqq (1 + \|z_{n-1}\|) I_d$ and ${\xi}_n \coloneqq \epsilon_n / (1 + \|z_{n-1}\|)$. Then,
    $\|T_n\|_{op} = o \left( n^{p} (\ln n)^{\eta/2} \right)$ a.s. for any $\eta > 0$.
    Although the theorem assumes non-negative exponents $a,b$ such that $\|T_n\|_{op} = o \left( (\ln n)^b / n^a \right)$ a.s.,
    it extends to negative values with the same proof, and we take $a \coloneqq -p$. This yields
    \[ \|E_n\|^2 = \mathcal{O} \left( n^{2p} \gamma_n (\ln n)^{1 + \eta} \right) \text{ a.s.} \]
    \paragraph{Bounding $R_n$.}
    We have for $n \geq n_0$,
    \[
        \|R_n\| \leq \|I_d - \gamma_n H\|_{op} \|R_{n-1}\| + \gamma_n \|r_n\| \leq \left( 1 - \lambda_{\min}(H) \gamma_n \right) \|R_{n-1}\| + \gamma_n \|r_{n}\|.
    \]
    Using equation \eqref{eq::z_n induction} and the hypothesis on $(r_n)$, we have
    \[
        \|r_n\| = \mathcal{O} \left( a_n \left( 1 + \|Z_{n-1}\| + \|E_{n-1}\| + \|R_{n-1}\| \right) \right)
        \text{ a.s.}
    \]
    We showed that $ 1 + \|Z_{n}\| + \|E_{n}\| = \mathcal{O} \left( \max \left\{ 1 , n^{p-\gamma/2} (\ln n)^{(1+\eta)/2} \right\} \right)$ a.s.
    Using the hypothesis on $(a_n)$, we can apply a.e. the deterministic result of Lemma \ref{lemma::bound Rn}.
    This yields with some $\kappa > 0$:
    \[
        \|R_n\| = \mathcal O \left( \left( \exp(-\kappa n^{1-\gamma}) +  a_{\lceil n/2 \rceil} \right) \max \left\{ 1 , n^{p-\gamma/2} (\ln n)^{(1+\eta)/2} \right\} \right) \text{ a.s.}
    \]
    We obtain with \eqref{eq::z_n induction}, after neglecting the exponential terms and using that $a_n = o(1)$ a.s.:
    \begin{equation}
        \label{eq::z_n rate}
        \|z_n - z^*\| = \mathcal O \left( a_{\lceil n/2 \rceil} + n^{p-\gamma/2} (\ln n)^{(1+\eta)/2} \right) \text{ a.s.}
    \end{equation}
    Let $k \in \mathbb N$ be the smallest integer such that $\|z_{n-1}\| = \mathcal{O}(n^{k\gamma/4})$ a.s. 
    Applying Equation \eqref{eq::z_n rate} with $p \coloneq k \gamma / 4$, 
    then using that $(\ln n)^{(1+\eta)/2} = o(n^{k\gamma/4})$ and $a_n = o(1)$ a.s., we get 
    \begin{align*}
        \|z_n\| \leq \|z_n - z^*\| + \|z^*\| 
        = \mathcal{O} \left( a_{\lceil n/2 \rceil} + n^{(k-2)\gamma/4}(\ln n)^{(1+\eta)/2}+ 1 \right)
        \subset \mathcal{O} \left( n^{(k-1)\gamma/4} + 1 \right) \text{ a.s.}
    \end{align*}
    If $k>0$, it leads to $\|z_n\| = \mathcal{O}(n^{(k-1)\gamma/4})$ a.s., which is a contradiction with the definition of $k$.
    Therefore $k=0$, and taking $p=0$ in \eqref{eq::z_n rate} gives us the desired rate of convergence.
\end{proof}

\begin{lemma}\label{lemma::bound Rn}
    Let $(r_n)_{n\geq0}, (s_n)_{n\geq 1}, (a_n)_{n\geq 1}$ be three non-negative sequences verifying for $n \geq n_0$
    \[
        r_n \leq \left( 1 - \gamma_n \right) r_{n-1} + \gamma_n a_n \left( s_n + r_n \right),
    \]
    where $\gamma_n \coloneq c_\gamma n^{-\gamma}$ with $\gamma \in (1/2, 1)$ and $c_\gamma > 0$.
    Assume that $(a_n)$ is non-increasing and converging to $0$,
    and that $s_n = \mathcal{O}(v_n)$ with $(v_n)$ positive and non-decreasing.
    Then, for any $0 < \kappa < c_\gamma(1 - 2^{\gamma - 1}) / (1 - \gamma) $, we have
    \[
        r_n = \mathcal{O} \left( \left( \exp(-\kappa n^{1-\gamma}) + a_{\lceil n/2 \rceil} \right) v_n \right).
    \]
\end{lemma}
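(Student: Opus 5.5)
We prove Lemma \ref{lemma::bound Rn} by absorbing the implicit $r_n$ term on the right-hand side, unrolling the resulting linear recursion, and splitting the sum at $\lceil n/2 \rceil$.

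\emph{Step 1: absorb the implicit term.} Since $a_n \to 0$ and $\gamma_n \to 0$, there is $n_1 \geq n_0$ such that $\gamma_n a_n \leq \varepsilon \gamma_n$ for $n \geq n_1$, with $\varepsilon>0$ small. Moving $\gamma_n a_n r_n$ to the left gives
\[
r_n \leq \frac{1-\gamma_n}{1-\gamma_n a_n} r_{n-1} + \frac{\gamma_n a_n}{1-\gamma_n a_n} s_n \leq (1-(1-\varepsilon')\gamma_n) r_{n-1} + 2\gamma_n a_n s_n .
\]
Here $\varepsilon' \to 0$ as $n_1$ grows, because $(1-\gamma_n)/(1-\gamma_n a_n) = 1-\gamma_n(1-a_n)/(1-\gamma_n a_n)$. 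Given $\kappa < c_\gamma(1-2^{\gamma-1})/(1-\gamma)$, we choose $n_1$ so that the effective rate $c := c_\gamma(1-\varepsilon')$ still satisfies $\kappa < c(1-2^{\gamma-1})/(1-\gamma)$.

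\emph{Step 2: unroll.} Using $1-x \leq e^{-x}$ and $s_k \leq C v_k \leq C v_n$ for $k\leq n$ (since $v$ is non-decreasing), we get
\[
r_n \leq r_{n_1}\, e^{-c\sum_{j=n_1+1}^n j^{-\gamma}} + 2C v_n \sum_{k=n_1+1}^n \gamma_k a_k\, e^{-c\sum_{j=k+1}^n j^{-\gamma}} .
\]
The first term is $\mathcal O(\exp(-\kappa' n^{1-\gamma}))$ for any $\kappa' < c/(1-\gamma)$. It is therefore $\mathcal O(\exp(-\kappa n^{1-\gamma}) v_n)$, because $v_n$ is bounded below by $v_1>0$.

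\emph{Step 3: split the sum at $\lceil n/2\rceil$.} This is the main point.

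For $k \geq \lceil n/2\rceil$, monotonicity gives $a_k \leq a_{\lceil n/2\rceil}$. It then suffices to bound $\sum_{k} \gamma_k \prod_{j=k+1}^n(1-c\gamma_j/c_\gamma)$ by a constant. This follows from the telescoping identity $\sum_k c'\gamma_k\prod_{j>k}(1-c'\gamma_j) \leq 1$, which holds when $c'\gamma_k\le 1$. It produces the $a_{\lceil n/2\rceil}v_n$ contribution.

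For $k < \lceil n/2\rceil$, we bound $a_k \leq a_{n_1}$ (a constant) and use
\[
\sum_{j=k+1}^n j^{-\gamma} \geq \sum_{j=\lceil n/2\rceil}^n j^{-\gamma} \geq \frac{n^{1-\gamma}-(n/2)^{1-\gamma}}{1-\gamma} - \mathcal O(1) = \frac{(1-2^{\gamma-1})n^{1-\gamma}}{1-\gamma} - \mathcal O(1).
\]
Together with $\sum_{k<n/2}\gamma_k = \mathcal O(n^{1-\gamma})$, this part is $\mathcal O(n^{1-\gamma}\exp(-c(1-2^{\gamma-1})n^{1-\gamma}/(1-\gamma)))\, v_n$. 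Since $\kappa$ lies strictly below $c(1-2^{\gamma-1})/(1-\gamma)$, the polynomial factor $n^{1-\gamma}$ is absorbed, leaving $\mathcal O(\exp(-\kappa n^{1-\gamma}) v_n)$.

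This is exactly where the strict inequality on $\kappa$ and the factor $1-2^{\gamma-1}$ come from. The only delicate bookkeeping is choosing $n_1$ so that the slack lost in Step 1 stays within this strict margin. Summing the three contributions gives the claim.
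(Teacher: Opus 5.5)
Your proof is correct and follows essentially the paper's route: you absorb the implicit $r_n$ term into a linear contraction, then unroll and split at $\lceil n/2\rceil$ with the telescoping bound. The paper instead divides the recursion by $v_n$ and delegates the unroll-and-split step to Proposition~\ref{prop::rate_deterministic_rec}, whereas you pull $v_n$ out by monotonicity. One point in your favour: you take $a_{n_1}\le\varepsilon$ arbitrarily small rather than the paper's $a_{n_1}\le 1/2$. This keeps the effective rate at $c_\gamma(1-\varepsilon')$, so you genuinely obtain every $\kappa<c_\gamma(1-2^{\gamma-1})/(1-\gamma)$. The paper's halved step $\gamma_n/2$ only yields $\kappa<c_\gamma(1-2^{\gamma-1})/(2(1-\gamma))$ through Proposition~\ref{prop::rate_deterministic_rec}, which falls short of the range the statement claims.
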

\begin{proof}
    Let $n_1 \geq n_0$ be such that $a_{n_1} \leq 1/2$, and $M \geq 0$ such that for any $n \in \mathbb N, s_n \leq M v_n$.
    For any $n \geq n_1$:
    \[
        r_{n} \leq \left( 1 - \gamma_{n} \right) r_{n-1} + M \gamma_{n} a_{n} v_{n} + \frac{1}{2} \gamma_n r_n
        \leq \left( 1 - \frac{1}{2} \gamma_{n} \right) r_{n-1} + M \gamma_{n} a_{n} v_{n} .
    \]
    Since $(v_n)$ is positive and non-decreasing, we have for $n\geq n_1$:
    \[
        \frac{r_{n}}{v_{n}} \leq \left( 1 - \frac{1}{2} \gamma_{n} \right) \frac{r_{n-1}}{v_{n}} + M \gamma_{n} a_{n}
        \leq \left( 1 - \frac{1}{2} \gamma_{n} \right) \frac{r_{n-1}}{v_{n-1}} + \frac{1}{2} \gamma_{n} \left( 2 M a_{n} \right).
    \]
    We can apply Proposition \ref{prop::rate_deterministic_rec} to $(r_{n}/v_{n})$,
    and we obtain for any $\kappa < c_\gamma(1-2^{\gamma-1}) (1-\gamma)^{-1}$:
    \[
        \frac{r_{n}}{v_{n}}
        = \mathcal{O} \left( \exp \left( - \kappa n^{1-\gamma} \right)  + a_{\lceil n/2 \rceil} \right).
    \]
\end{proof}

\begin{proposition}\label{prop::rate_deterministic_rec}
    Let $(r_n)_{n\geq0}$, $(\gamma_n)_{n\geq1}$, and $(a_n)_{n\geq1}$ be three non-negative sequences verifying for $n \geq n_0$:
    \begin{align} \label{eq::recursive relation determinist}
        r_{n} \leq \left( 1 -  \gamma_{n}  \right) r_{n-1} + \gamma_{n} a_n.
    \end{align}
    Assume that $(a_n)$ is non increasing.
    Then, for all $n,m$ with $n_0 < m \leq n$, we have the upper bound:
    \[ r_n \leq \exp \left( - \sum_{k=m}^n \gamma_{k} \right) \left( r_{n_0} + \sum_{k=n_0+1}^{m-1} \gamma_{k}a_{k} \right) + a_{m}  . \]
    In particular, if \(\gamma_n = c / (n^{\gamma} + c')\) with \(c > 0\), \(\gamma \in (0, 1)\), \(c' \geq 0\),
    and if \(\lim_{n \to \infty} a_n = 0\),
    then for any $\kappa$ such that $0 < \kappa < c(1 - 2^{\gamma-1}) (1-\gamma)^{-1}$, we have:
    \[ r_n = \mathcal O \left( \exp(-\kappa n^{1-\gamma}) + a_{\lceil \frac{n}{2} \rceil } \right) \]
\end{proposition}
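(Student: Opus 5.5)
The first bound comes from unrolling the recursion. For $n \geq n_0$, every factor $1-\gamma_n$ lies in $[0,1]$. Iterating \eqref{eq::recursive relation determinist} gives
\[
r_n \leq \prod_{k=n_0+1}^{n}(1-\gamma_k)\, r_{n_0} + \sum_{k=n_0+1}^{n} \gamma_k a_k \prod_{j=k+1}^{n}(1-\gamma_j).
\]
I split the sum at index $m$. For $k<m$, I bound the product by $\prod_{j=m}^{n}(1-\gamma_j) \leq \exp(-\sum_{j=m}^n \gamma_j)$, using $1-x\leq e^{-x}$ and dropping the extra factors in $[0,1]$. The same bound applies to the initial term, which together give the first part of the right-hand side. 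For $k\geq m$, monotonicity gives $a_k \leq a_m$, so it remains to show that $\sum_{k=m}^{n}\gamma_k\prod_{j=k+1}^{n}(1-\gamma_j)\leq 1$. This sum telescopes to $1-\prod_{j=m}^n(1-\gamma_j)\leq 1$, by the identity $\gamma_k P_{k+1} = P_{k+1}-P_k$ with $P_k=\prod_{j=k}^n(1-\gamma_j)$. This yields the stated inequality.

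One subtlety: if $\gamma_n>1$ for small $n$, the factors $1-\gamma_n$ could be negative. The inequality $r_n \le (1-\gamma_n)r_{n-1}+\dots$ with $r_{n-1}\ge0$ still holds, but bounding by the product needs nonnegative factors. I will therefore assume, enlarging $n_0$ if necessary, that $\gamma_n\leq 1$ for $n>n_0$; in the application $\gamma_n\to0$, so this costs nothing.

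For the particular case, I take $m=\lceil n/2\rceil$. Then $\sum_{k=m}^n\gamma_k \geq \int_{m}^{n+1} c/(x^\gamma+c')\,dx$. Since $x^\gamma+c'\leq x^\gamma(1+o(1))$, this is asymptotically $\frac{c}{1-\gamma}\big(n^{1-\gamma}-(n/2)^{1-\gamma}\big)(1+o(1)) = \frac{c(1-2^{\gamma-1})}{1-\gamma}n^{1-\gamma}(1+o(1))$. So for any $\kappa$ strictly below that constant, the exponential factor is eventually at most $\exp(-\kappa' n^{1-\gamma})$ with $\kappa<\kappa'$.

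The remaining work is controlling $r_{n_0}+\sum_{k<m}\gamma_k a_k$. Since $a_k\le a_{n_0+1}$, this is $O(1+\sum_{k\le n}\gamma_k)=O(n^{1-\gamma})$, which is polynomial. A polynomial factor is absorbed by the slack $\exp(-(\kappa'-\kappa)n^{1-\gamma})$, giving the $\mathcal O(\exp(-\kappa n^{1-\gamma}) + a_{\lceil n/2\rceil})$ bound. This absorption is why $\kappa$ must be strictly less than the constant, and it is the only delicate step. The hypothesis $a_n\to0$ is not needed for the bound itself; it only makes the bound informative.
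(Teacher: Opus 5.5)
Your proof is correct and follows essentially the same route as the paper: unroll the recursion, split at $m$, bound the early part by $\exp(-\sum_{k=m}^n\gamma_k)$, telescope the late part to at most $a_m$, and in the particular case take $m=\lceil n/2\rceil$ and absorb the $\mathcal O(n^{1-\gamma})$ prefactor by slightly decreasing $\kappa$. Your remark that $\gamma_n\le 1$ for $n>n_0$ is needed is a genuine point. The paper uses it only implicitly, through $0\le 1-\gamma_j\le 1$, and the first bound can fail without it, while enlarging $n_0$ costs nothing in the particular case.
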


\begin{proof}
    By induction on $n$, and by splitting the sum at any rank $n_0 < m \leq n$, one has
    \[
        r_n \leq \prod_{j=n_0+1}^{n} \left( 1-  \gamma_{j} \right) r_{{n_0}}
        + \sum_{k=n_0+1}^{m-1} \left( \prod_{j=k+1}^n \left(1-  \gamma_{j} \right) \right) \gamma_{k}a_{k}
        + \sum_{k=m}^n \left( \prod_{j=k+1}^n \left(1-  \gamma_{j} \right) \right) \gamma_{k}a_{k}
    \]
    Using that $1 - x \leq e^{-x}$, that $0 \leq 1-\gamma_j \leq 1$ and that $(a_n)$ is non increasing, we get
    \[
        r_n \leq \exp \left( -\sum_{j=n_0+1}^n \gamma_j \right) r_{n_0}
        \,+\, \exp \left( - \sum_{j=m}^n \gamma_j \right) \sum_{k=n_0+1}^{m-1} \gamma_k a_k
        \,+\, \underbrace{a_m \sum_{k=m}^n \left( \prod_{j=k+1}^n (1 - \gamma_j) \right) \gamma_k }_{\eqcolon S}
    \]
    By identifying a telescopic sum, we get:
    \begin{align*}
        S & = a_{m} \sum_{k=m}^n \left( \prod_{j=k+1}^n (1 - \gamma_j) - \prod_{j=k}^n (1 - \gamma_j)  \right)
        = a_{m} \left( 1 - \prod_{j=m}^n (1 - \gamma_j) \right)
        \leq a_{m},
    \end{align*}
    which proves the first claim.

    For the particular case, assume \(\gamma_n = c / (n^{\gamma} +c') \) with \(c > 0\), \(c' \geq 0\), \(\gamma \in (0, 1)\), and \(\lim_{n \to \infty} a_n = 0\).
    Then, \(\sum_{k=m}^n \gamma_k \sim c (1 - \gamma)^{-1} (n^{1-\gamma} - m^{1-\gamma})\), and choosing \(m = \lceil n/2 \rceil\), the sum grows as \(\kappa n^{1-\gamma}\)
    with \(\kappa \coloneq c(1-\gamma)^{-1} (1-2^{\gamma-1}) \) positive.
    The exponential \(\exp \left( - \sum_{k=m}^n \gamma_k \right)\) decays as \(\exp(-\kappa n^{1-\gamma} + \mathrm{const})\).
    The bound becomes
    \[
        r_n = \mathcal{O} \left( \exp \left( -\kappa n^{1-\gamma} \right) 
        \left(1 + n^{1-\gamma} \right) + a_{\lceil n/2 \rceil} \right)
        \subset \mathcal{O} \left( \exp(-\kappa' n^{1-\gamma}) + a_{\lceil n/2 \rceil} \right)
    \]
    for $\kappa'<\kappa$, completing the proof.
\end{proof}
The following Lemma \ref{Robbins-Siegmund} is a simple corollary of the Robbins-Siegmund theorem,
and is stated and proved in \cite{godichon-baggioni_online_2025}.
\begin{lemma}\label{Robbins-Siegmund}
    Let $(V_n)$, $(B_n)$, $(E_n)$, $(D_n)$ and $(a_n)$ be five positive sequences adapted to $(\mathcal F_n)_{n \in \mathbb N}$ such that
    \[ \mathbb E\left[ V_n \mid\mathcal F_{n-1} \right] \leq (1 + E_n) V_{n-1} + B_n - D_n \quad \text{a.s.} \]
    Assume also that $\sum_{n=0}^\infty \frac{B_n}{a_n} < +\infty$ a.s. If $a_n \rightarrow \infty$, then  $V_n = o(a_n)$ a.s.
\end{lemma}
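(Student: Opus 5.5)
The statement is Lemma \ref{Robbins-Siegmund}: from $\E[V_n\mid\F_{n-1}]\le(1+E_n)V_{n-1}+B_n-D_n$ and $\sum B_n/a_n<\infty$ with $a_n\to\infty$, deduce $V_n=o(a_n)$ a.s. I would reduce it to the classical Robbins--Siegmund theorem applied to the rescaled sequence $V_n/a_n$, and then use a Kronecker-type argument to turn convergence into $o(\cdot)$.

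\textbf{Step 1: rescale.} I read the statement with $\sum_n E_n<\infty$ a.s. implicitly assumed; this is the setting where it is used (there $E_n=\mathcal O(\gamma_n^2+\gamma_n/\zeta_n)$ is summable). I also take $(a_n)$ non-decreasing and predictable, e.g.\ deterministic as in the application, so that it can be moved through the conditional expectation. Set $W_n:=V_n/a_n$. Dividing the hypothesis by $a_n$ and using $a_{n-1}\le a_n$ gives
\[ \E[W_n\mid\F_{n-1}] \le (1+E_n)W_{n-1} + \frac{B_n}{a_n} - \frac{D_n}{a_n} - \Big(\frac{1}{a_{n-1}}-\frac{1}{a_n}\Big)V_{n-1}. \]
The Robbins--Siegmund theorem then says that $W_n$ converges a.s.\ to a finite limit $W_\infty$. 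It also gives
\[ \sum_n \Big(\frac{1}{a_{n-1}}-\frac1{a_n}\Big)V_{n-1} < \infty \quad \text{a.s.} \]

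\textbf{Step 2: show $W_\infty=0$.} This is the main obstacle, because convergence of $W_n$ alone only gives $V_n=\mathcal O(a_n)$. Write $W_{n-1}=a_{n-1}^{-1}V_{n-1}$, so the summable series above equals $\sum_n (1-a_{n-1}/a_n)W_{n-1}$. If $W_\infty>0$ on some event, then $W_{n-1}\ge W_\infty/2$ eventually on that event. Hence $\sum_n(1-a_{n-1}/a_n)<\infty$ there. But $\sum_n(1-a_{n-1}/a_n)\ge\sum_n\ln(a_n/a_{n-1})=+\infty$, since $a_n\to\infty$. This contradiction gives $W_\infty=0$ a.s., that is, $V_n=o(a_n)$.

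If monotonicity of $a_n$ is not available, I would fall back on a slightly slower comparison sequence. Choose $\tilde a_n\le a_n$, non-decreasing, with $\tilde a_n\to\infty$ and $\sum B_n/\tilde a_n<\infty$; this follows from the standard fact that a convergent series $\sum c_n$ admits weights $w_n\uparrow\infty$ with $\sum w_nc_n<\infty$. Steps 1--2 then run for $\tilde a_n$. In the application $a_n=\zeta_n^2$ is deterministic and increasing, so the direct argument applies.
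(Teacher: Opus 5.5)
Your route is the standard one: divide by a non-decreasing predictable $a_n$, apply Robbins--Siegmund to $W_n=V_n/a_n$, then use the extra summable term $\sum_n(1/a_{n-1}-1/a_n)V_{n-1}<\infty$ to force the limit to vanish. The paper gives no proof of its own; it only calls the lemma a simple corollary of Robbins--Siegmund proved in earlier work, so your approach matches its intent.

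You are right that the printed statement needs extra hypotheses: $\sum_n E_n<\infty$, monotonicity of $a_n$, and $\F_{n-1}$-measurability of $E_n,B_n,D_n$ so that Robbins--Siegmund applies. All of these hold in the paper's use with $a_n=\zeta_n^2$.

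However, the inequality that closes Step 2 is reversed. For $x\in(0,1]$ one has $1-x\le-\ln x$, so $1-a_{n-1}/a_n\le\ln(a_n/a_{n-1})$. Divergence of $\sum_n\ln(a_n/a_{n-1})$ therefore tells you nothing about $\sum_n(1-a_{n-1}/a_n)$. The divergence you need is still true, by the Abel--Dini argument: for $m>n$,
\[ \sum_{k=n+1}^{m}\Big(1-\frac{a_{k-1}}{a_k}\Big)\ \ge\ \sum_{k=n+1}^{m}\frac{a_k-a_{k-1}}{a_m}=1-\frac{a_n}{a_m}\xrightarrow[m\to\infty]{}1, \]
so every tail of the series is at least $1$ and the series diverges. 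With this replacement Step 2 goes through.

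The fallback for non-monotone $a_n$ is wrong and should be dropped, because the lemma is false without monotonicity, even when $\sum_n E_n<\infty$. Take all sequences deterministic with $E_n=D_n=0$. Set $a_n=n^3$ and $B_n=n$, except $a_{2^k}=k+1$ and $B_{2^k}=0$, and let $V_n=1+\sum_{j\le n}B_j$. Then $a_n\to\infty$ and $\sum_n B_n/a_n<\infty$, yet $V_{2^k}\asymp 4^k$ while $a_{2^k}=k+1$.

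This is consistent with what the weight fact you quote actually gives. It only yields $\tilde a_n=a_n/w_n$, which need be neither monotone nor divergent. Meanwhile any non-decreasing minorant satisfies $\tilde a_n\le\inf_{m\ge n}a_m$, and in this example that makes $\sum_n B_n/\tilde a_n$ diverge. Monotonicity of $a_n$ has to be a hypothesis, exactly as your main argument uses it.
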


 \section{Assumptions for the streaming case}\label{ass::B}

The objective of this section is to give the assumptions for the Corollary \ref{cor::streaming SNA},
directly stated on the function $f$.
More precisely, in the case where $F(\theta) = \mathbb{E}_{\xi}  [ f(\xi , \theta) ]$ 
and considering the i.i.d. or mini-batch setting,
the assumptions on the oracles are satisfied as soon as corresponding assumptions on the function $f$ are satisfied:

 \begin{enumerate}[label=\textbf{(B\arabic*)}, series=assumptions]
 	\item \label{Assumption oracles unbiased prime} \emph{(Differentiability of $f$).}
 		For all $\theta \in \mathbb{R}^{d}$, the random function $\theta \mapsto f(\xi, \theta)$ 
		is almost surely twice differentiable at $\theta$.
 	\item \label{Assumption expected smoothness prime} \emph{(Growth Condition).}
 		There exists $  \mathcal{L}_g, \mathcal{L}_h, \sigma^2 \geq 0$ such that for all  $\theta \in \R^d$:
 		\begin{enumerate}[label=\alph*)] 
 			\item \label{Assumption:expected_smoothness_gradient prime} 
 				$\E \left[ \|\nabla f(\xi,\theta) \|^2 \right]
 				\leq  2\mathcal{L}_g \left( F(\theta) - F(\theta^*) \right) + \sigma^2$ 
 			\item \label{Assumption:expected_smoothness_hessian prime}
 				$\E \left[ \norm{\nabla^{2}  f(\xi ,\theta)}_{op}^2   \right] \leq \mathcal{L}_h$.
 		\end{enumerate}
 	\item \label{Assumption fct: hessian positive continuous prime} \emph{(Hessian at minimizer).}
 		The Hessian matrix $H \coloneq \nabla^2 F(\theta^*)$ at the minimizer is positive definite,
 		and the mapping $\theta	\mapsto \nabla^2 F(\theta)$ is continuous at $\theta^*$.
  	\item \label{Assumption oracle lyapunov prime} \emph{(Lyapunov Conditions).}
 		There exist $q , q' > 2$ and $M, M'>0$ such that:
 		\begin{enumerate}[label=\alph*)] 
 			\item \label{Assumption:Lyapunov_g prime} 
 				$ \displaystyle  \sup_{\norm{\theta - \theta^*} < M}
				\E \left[ \norm{\nabla f(\xi,\theta) - \nabla F(\theta)}^{q}    \right]
 				  < + \infty $
 			\item \label{Assumption:Lyapunov_H prime} 
 				$ \displaystyle   \sup_{\norm{\theta - \theta^*} < M'}
				\E \left[ \norm{\nabla^{2}f(\xi,\theta) - \nabla^2 F(\theta)}_{F}^{q'}  \right]
 				< + \infty $.
 		\end{enumerate}
 	\item \label{Assumption oracle variance limit prime} \emph{(Continuity of Covariance).}
 		The covariance function $\theta \mapsto \mathrm{Cov} \left( \nabla f(\xi, \theta) \right)$
 		is continuous at $\theta^*$, and we define 
 		$ \Sigma \coloneq \mathrm{Cov} \left( \nabla f(\xi, \theta^*) \right)$,
		which, as $\nabla F(\theta^*)=0$, is equivalent to 
		$\Sigma = \mathbb{E} \left[ \nabla f(\xi, \theta^*) \nabla f(\xi, \theta^*)^T \right]$.
 \end{enumerate}
{ 
\section{Matrix gradient}\label{sec:gradient}

\subsection*{Gradient and adjoint in a Hilbert space}

Let $(\mathcal{H}, \langle \cdot , \cdot \rangle)$ be a Hilbert space, 
$\| \cdot \|$ the norm induced by the inner product, $U \subset \mathcal{H}$ an open set,
and $f: U \to \mathbb{R}$ a function differentiable at a point $a \in U$.
The differential $df(a)$ being by definition a continuous linear operator from $\mathcal{H}$ to $\mathbb{R}$,
the Riesz representation theorem ensures the existence of a unique element $\nabla f(a) \in \mathcal{H}$
such that
\[
    \forall h \in \mathcal{H}, \quad df(a)(h) = \langle \nabla f(a) , h \rangle.
\]
The element $\nabla f(a)$ is called the gradient of $f$ at $a$.

Let $L: \mathcal{H} \to \mathcal{H}$ be a continuous linear operator.
For any $y \in \mathcal{H}$, the mapping $h \mapsto \langle y , L h \rangle$ 
is a bounded linear functional on $\mathcal{H}$,
so by the Riesz representation theorem, there exists a unique element $L^{*} y \in \mathcal{H}$
such that
\[
    \forall h \in \mathcal{H}, \quad \langle y , L h \rangle = \langle L^{*} y , h \rangle. 
\]
The operator $L^{*}: \mathcal{H} \to \mathcal{H}$ is called the adjoint operator of $L$.

\subsection*{Gradient of a quadratic functional}

Let $L: \mathcal{H} \to \mathcal{H}$ be a continuous linear operator, and $b \in \mathcal{H}$.
Consider the quadratic functional $f: \mathcal{H} \to \mathbb{R}$ defined as
\[
    f(x) \coloneq \| L x + b \|^{2}.
\]
Let $x, h \in \mathcal{H}$. We have
\begin{align*}
    f(x + h)
    &= \| L (x + h) + b \|^{2} \\
    &= \| L x + b \|^{2} + 2 \langle L x + b , L h \rangle + \| L h \|^{2} \\
    &= f(x) + 2 \langle L^{*} (L x + b) , h \rangle + \| L h \|^{2} .
\end{align*}
The gradient of $f$ at $x$ is therefore given by
\[
    \nabla f(x) = 2 L^{*} (L x + b).
\]

\subsection*{Application to matrix spaces}

We now apply the previous result to the Hilbert space $\mathcal{H} = \mathcal{M}_{d}(\mathbb{R})$,
the space of real $d \times d$ matrices, endowed with the Frobenius inner product:
\[
    \langle M , N \rangle_F \coloneq \mathrm{tr}(M^T N).
\]
The space $\mathcal{H}$ is finite-dimensional, so all linear operators on $\mathcal{H}$ are continuous.
Let $A,B \in \mathcal{H}$.
The adjoint of the linear operator $M \mapsto A M$ is $N \mapsto A^T N$.
Indeed, for any $M, N \in \mathcal{H}$,
\[
    \langle N , A M \rangle_F 
    = \mathrm{tr}(N^T A M) = \mathrm{tr}((A^T N)^T M) = \langle A^T N , M \rangle_F.
\]
Consider the quadratic functional $f: \mathcal{H} \to \mathbb{R}$ defined as
\[
    f(M) \coloneq \| A M + B \|_F^{2}.
\]
The gradient of $f$ at point $M \in \mathcal{H}$ is given by
\[
    \nabla f(M) = 2 A^T (A M + B).
\]

\subsubsection*{Gradient of composition with transpose}

Let $f: \mathcal{H} \to \mathbb{R}$ be a differentiable function, and define $g: \mathcal{H} \to \mathbb{R}$ as
\[
    g(M) \coloneq f(M^T).
\]
Denoting by $t: \mathcal{H} \to \mathcal{H}$ the transpose operator for matrices, we have $g = f \circ t$.
The transpose operator is linear and self-adjoint since for any $M, N \in \mathcal{H}$,
\[
    \langle N , t(M) \rangle_F 
    = \mathrm{tr}(N^T M^T) 
    = \mathrm{tr}((N^T)^T M) 
    = \langle t(N) , M \rangle_F.
\]
Using the chain rule for differentiation, we obtain that the differential of $g$ at point $M \in \mathcal{H}$ is given by
\[
    dg(M) = df(M^T) \circ t.
\]
Therefore, the gradient of $g$ at point $M \in \mathcal{H}$ is
\[
    \nabla g(M) 
    = t^*(\nabla f(M^T)) 
    = t(\nabla f(M^T)) 
    = (\nabla f(M^T))^T. 
\]    
}

\end{document}